\newcommand{\thmc}[1]{{\color{#1} $\bullet$}}
\newtheorem{thm}{\thmc{blue} Theorem}
\newtheorem{cor}[thm]{\thmc{cyan} Corollary}
\newtheorem{lem}[thm]{\thmc{magenta} Lemma}
\newtheorem{prop}[thm]{\thmc{cyan} Proposition}
\newtheorem{ques}[thm]{Question}
\newtheorem*{clm*}{Claim}
\theoremstyle{definition}
\newtheorem{dfn}[thm]{\thmc{red} Definition}
\newtheorem{exm1}[thm]{Example}
\theoremstyle{remark}
\newtheorem{rem}[thm]{Remark}
\newenvironment{lem*}[1]{\vspace{1ex}\noindent
{\bf Lemma* (#1).} [restatement]  \hspace{0.5em} \em }{ }
\newenvironment{thm*}[1]{\vspace{1ex}\noindent 
{\bf Theorem* (#1).} [restatement]  \hspace{0.5em} \em }{ }
\newcommand{\abs}[1]{\left\vert#1\right\vert}
\newcommand{\set}[1]{\left\{#1\right\}}
\newcommand{\sr}[1]{\left(#1\right)}
\newcommand{\Integer}{\mathbb{Z}}
\newcommand{\Z}{\Integer}
\newcommand{\N}{\mathbb{N}}
\newcommand{\R}{\mathbb{R}}
\newcommand{\eps}{\varepsilon}
\newcommand{\ie}{{\em i.e.\ }}
\newcommand{\eg}{{\em e.g.\ }}
\DeclareMathOperator{\E}{\mathbb{E}}     
\DeclareMathOperator{\Var}{Var}
\renewcommand{\Pr}{}
\let\Pr\relax
\DeclareMathOperator{\Pr}{\mathbb{P}}
\newcommand{\bP}{\mathbf{P}}
\newcommand{\bE}{\mathbf{E}}
\newcommand{\1}[1]{\mathbf{1}_{\set{ #1 } }}
\def\squareforqed{\hbox{\rlap{$\sqcap$}$\sqcup$}}
\def\qed{\ifmmode\squareforqed\else{\unskip\nobreak\hfil
\penalty50\hskip1em\null\nobreak\hfil\squareforqed
\parfillskip=0pt\finalhyphendemerits=0\endgraf}\fi}
\newcommand{\ignore}[1]{ }
\newcommand{\p}{\partial}
\newcommand{\dist}{\mathrm{dist}}
\newcommand{\vphi}{\varphi}
\renewcommand{\o}{\omega}
\newcommand{\Ee}{\mathcal{E}}
\newcommand{\AND}{\qquad \textrm{and} \qquad}
\renewcommand{\O}{\Omega}
\newcommand{\define}[1]{{\bf #1}}
\newcommand{\mn}{\wedge}
\newcommand{\SAW}{\mathsf{SAW}}
\newcommand{\NB}{\mathsf{NB}}
\begin{document}

\title[SAW on finite graphs]{Self-avoiding walks on finite graphs of large girth}

\author[Ariel Yadin]{Ariel Yadin$^*$}
\thanks{$^*$Ben-Gurion University of the Negev. {\em email:} \texttt{yadina@bgu.ac.il} \\
Supported by the Israel Science Foundation grant No. 1346/15. \\
Thanks to Vincent Beffara, Itai Benjamini and Hugo Duminil-Copin for useful discussions.} 

\maketitle

\begin{abstract}
We consider self-avoiding walk on finite graphs with large girth.  
We study a few aspects of the model originally considered by Lawler, Schramm and Werner on finite balls in $\Z^d$.
The expected length of a random self avoiding path is considered.
We discuss possible definitions of ``critical'' behavior in the finite volume setting.
We also define a ``critical exponent'' $\gamma$ for sequences of graphs of size tending to infinity,
and show that $\gamma = 1$ in the large girth case.
\end{abstract}

\section{Introduction}

\subsection{Self-avoiding walks}

A \define{self-avoiding walk} is a path in a graph that does not 
visit any vertex more than once.
Counting the number of self-avoiding walks of length $n$  started at the origin in $\Z^d$,
is a long standing open problem.
It is very difficult to come up with formulas that capture the correct asymptotics.  
In fact, even the exponential growth rate of the number of such walks is difficult to precisely
calculate in most cases; this number $\mu$ is known as the {\em connective constant} of the lattice.
For more on self-avoiding walks in the Euclidean context see \cite{hugo, MadrasSlade}.

Let us briefly introduce the main model usually considered in the plane (and in fact in $\Z^d$ in general).
This is sometimes called {\em Lawler-Schramm-Werner} model of self-avoiding walk \cite{LSW5}.
For some parameter $0< x \in \R$ and scaling factor $\delta>0$ consider
the finite graph $G_\delta : = \delta \Z^d \cap B(0,1)$, where $B(0,1)$ is the Euclidean ball of radius $1$.
Let $\SAW_\delta$ be the set of all self-avoiding walks in $G_\delta$ started at $0$ 
with an endpoint in the boundary of $G_\delta$ (there are finitely many such walks).
We may define a probability measure $\Pr_x^{\delta}$ 
on $\SAW_\delta$ by letting the probability of $\o \in \SAW_\delta$ 
be proportional to $x^{|\o|}$ where $|\o|$ is the length of $\o$.

It is known that the model undergoes a phase transition at $x_c = \mu^{-1}$.
Ioffe \cite{Ioffe} has shown that for $x<x_c$ the measures $\Pr_x^{\delta}$ converge 
(in an appropriate sense) to a measure on geodesics from $0$ to the boundary of $B(0,1)$.
For $x>x_c$ one may show that the limiting curve fills the ball $B(0,1)$ (again, in an appropriate sense) 
\cite{DKY12}.

The major question is to understand what happens at the critical point $x=x_c$.
In dimensions $d \geq 4$ the limiting curve is expected to be scaling to a Brownian motion;
this is related to works of Brydges \& Spencer 
\cite{BS85} and Hara \& Slade \cite{Hara08, HaraSlade1,HaraSlade2, HaraSlade3}
(see also \cite{BDS11, BEI92, BI03A, BI03B, BrydgesImbrieSlade, BS10} for the {\em upper critical dimension} $d=4$,
and the book \cite{MadrasSlade} and references therein).  This is known for $d \geq 5$ using {\em lace expansion}
\cite{HaraSlade2}. 
Dimension $d=3$ is the most mysterious.
In dimension $d=2$ the limiting curve is conjectured to be $\mathsf{SLE}_{8/3}$, Schramm-Loewner Evolution
of parameter $\tfrac83$ \cite{LSW5}.  Not much has been rigorously proven regarding 
the critical two-dimensional case; even very intuitive facts are quite involved, see \eg \cite{HugoAlan}.

\subsection{Finite graphs}

Self-avoiding walks on general graphs have received much less attention than the Euclidean lattices.
(See, for example, \cite{AlmJanson, GZ13, MadrasWu} and references therein.)
In this note we adapt the Lawler-Schramm-Werner model to the setting of {\em finite} graphs.
The problem on finite graphs is that there is no canonical way to define
``criticality'' and it is not clear what ``mean field behavior'' is.
This is the purpose of our definition of the critical exponent $\gamma$ and critical sequences below, 
and their relations to the expected length of a self-avoiding path and the asymptotic behavior
of the partition function.  This is explained in analogy to the more classical Euclidean space setting.

There are two main results of this paper.
The first, is the definition of the notion of critical sequences and critical exponent for the finite graph setting,
with the different viewpoints relating them to expected length and intersection of independent self-avoiding paths.
This is presented in Section \ref{scn: critical} and Theorem \ref{thm: critical and I}.

The second main result is the analysis of the critical behavior of self-avoiding walks in the large-girth case.
We have two different types of possible ``mean field'' behaviors (in analogy to the complete graph case
and the Euclidean case), and we show that large girth graphs 
only exhibit one of these (namely critical exponent $\gamma=1$).
This is done in Theorems \ref{thm: large girth} and \ref{thm: gamma}.

Let us precisely define the model and state the results.

\subsection{The model}

A {\em path} $\o$ in a graph $G$ is a sequence $(\o_0,\o_1,\ldots,\o_n)$ of vertices such that 
$\o_j \sim \o_{j+1}$ for all $j<n$, where $x \sim y$ means $x,y$ are adjacent in the graph $G$.
For such a path $\o = (\o_0,\o_1,\ldots,\o_n)$ we use $|\o| = n$ to denote the length of the path,
which is the number of edges traversed.

For a graph $G$ we denote by $\SAW(G)$ the set of finite length self avoiding walks in $G$; that is,
$$ \SAW(G) = \set{ \o \ : \ \o \textrm{ is a finite path and }  \forall \ k \neq j \  ,\  \o_k \neq \o_j } . $$
We use $\SAW(o,G)$ where $o$ is a vertex in $G$ to denote the set of all self avoiding walks in $G$ starting at $o$.
$\SAW_k(G)$ (resp.\ $\SAW_k(o,G)$) 
denotes the set of those self avoiding walks in $\SAW(G)$ (resp.\ $\SAW(o,G)$) 
which have length $k$.  

%
%

\begin{dfn}
Let $G$ be a graph (finite or infinite).  Let $o \in G$ be some vertex.

For a real parameter $x>0$ define the \define{partition function}
$$ Z_{o,G}(x) := \sum_{\o \in \SAW(o,G)} x^{|\o|} = \sum_{n=0}^{\infty} |\SAW_n(o,G)| \cdot x^n . $$
(When $G$ is infinite this converges for $x < \mu^{-1}$ where $\mu=\mu(G)$ is the connective constant of $G$.)
For any $x$ for which the partition function converges, 
define a probability measure on $\SAW(o,G)$ by
$$ \Pr_{x,o,G}[\o] = (Z_{o,G})^{-1} \cdot x^{|\o|} . $$
\end{dfn}

When $G$ is a transitive graph we will usually omit the root vertex $o$, since any vertex plays the same role;
\eg on a transitive graph $G$ by $\Pr_{x,G}$ we mean $\Pr_{x,o,G}$ for some $o$.

Expectation under $\Pr_{x,o,G}$ is denoted $\E_{x,o,G}$ (or $\E_{x,G}$ when $G$ is transitive).

We will be interested in two main quantities:
\begin{itemize}
\item The expected length of a random element under $\Pr_{x,o,G}$;
that is 
$$ L(x,o,G) = \E_{x,o,G} [|\o|] = (Z_{o,G}(x))^{-1} \cdot \sum_{n=0}^{\infty} |\SAW_n(o,G)| \cdot n \cdot x^n . $$

\item The probability that two independent samples from $\Pr_{x,o,G}$ intersect trivially; that is,
$$ I(x,o,G) = (\Pr_{x,o,G} \times \Pr_{x,o,G}) [ \o \cap \o' = \set{o} ] . $$
(Here $\o,\o'$ are independent samples each with law $\Pr_{x,o,G}$.)
\end{itemize}
Of course the analysis of the partition function $Z_{o,G}(x)$ plays an important part.
As usual, the vertex $o$ is omitted in the notation for transitive graphs.

\subsection{Critical exponents}
\label{scn: critical exp}

We start by a brief review of critical exponents in the classical case, as motivation for our definition 
of the critical exponent $\gamma$ below.

Let $G$ be some transitive infinite graph, and fix some origin $o \in G$.
The classical literature on self-avoiding walks is interested in determining the existence and values
of the so-called {\em critical exponents}.  We will not go into all the details here, see 
\cite{hugo} and \cite{MadrasSlade} for more.
One of these critical exponents is $\gamma$.  It is defined as the number such that for some constant $A$,
$$ |\SAW_n(o,G)| \sim A \cdot \mu^n \cdot n^{\gamma-1} \qquad \textrm{as } n \to \infty . $$
(Here $f(x) \sim g(x)$ as $x \to a$ means that $\lim_{x \to a} f(x)/g(x) = 1$ and the limit exists.)

For example, lace expansion methods show that when $G=\Z^d$ for $d \geq 5$ we have that the 
exponent $\gamma$ exists and $\gamma = 1$ \cite{HaraSlade1, HaraSlade2, HaraSlade3}.
This is what is called {\em mean field behavior}, because the analogous quantity for the simple random walk
is also $1$ in dimensions $d \geq 5$.

$\gamma$ also has a probabilistic interpretation.  If it exists, 
then the probability that two independent uniformly chosen self-avoiding walks from $\SAW_n(o,G)$
intersect only at the origin is $\sim c n^{1-\gamma}$ as $n \to \infty$.

It is possible to show that the exponent $\gamma$ exists if and only if 
for some constants $c,c'$,
$$ Z_G(x) \sim c ( 1 - \mu \cdot x )^{-\gamma} \AND 
L(x,G) \sim c' (1- \mu \cdot x)^{-1} \qquad \textrm{as } x \nearrow \mu^{-1} , $$
(see \eg \cite{hugo}). 

If $\gamma$ exists then $\gamma  = \lim_{x \nearrow \mu^{-1} } \frac{ \log Z_{o,G}(x) }{ \log L(x,o,G) } . $
Thus, with this intuition in mind we define
\begin{align}
 \gamma(x,o,G) & := \frac{ \log Z_{o,G}(x) }{ \log (L(x,o,G)+1) } . 
\end{align}

In Proposition \ref{prop:  intersection} we show that for transitive $G$,
$$ L(x,G) + 1 = I(x,G) \cdot Z_G(x) . $$
Plugging in the above predictions we have that if the exponent $\gamma$ exists then
$$ I(x,G) \sim c (1-x \mu)^{\gamma-1} . $$
Thus, mean field $\gamma=1$, implies that $I(x,G)$ 
is bounded away from $0$ as $x \nearrow \mu^{-1}$.  
Non mean field exponent, $\gamma > 1$,
implies that this probability converges to $0$ as $x \nearrow \mu^{-1}$.

These properties motivate our definition of the critical exponent in the finite case.

\subsection{Criticality in the finite setting}
\label{scn: critical}

Now let us return to the finite setting.
Let $(G_n)_n$ be a sequence of 
finite graphs with $|G_n| \to \infty$, and $o_n \in G_n$ some root vertex.

\begin{dfn}
For a sequence $(x_n)_n$ of positive real numbers:
\begin{itemize}
\item We say that sequence $(x_n)_n$ is \define{super-critical} (for $\SAW$ on $(o_n,G_n)_n$) if
$$ \liminf_{n \to \infty} Z_{o_n,G_n}(x_n) = \infty . $$
\item We say that the sequence $(x_n)_n$ is \define{sub-critical} if 
$$ \limsup_{n \to \infty} Z_{o_n,G_n}(x_n) < \infty . $$
\item We say that the sequence $(x_n)_n$ is \define{critical} if for 
any $0<\eps<1$ the sequence $(x_n(1+\eps))_n$ is super-critical and the sequence $(x_n(1-\eps))_n$
is sub-critical.
\end{itemize}
\end{dfn}


Critical sequences are unique in the following sense.

\begin{prop} \label{prop:unique}
Let $(x_n)_n$ be a critical sequences for $(o_n,G_n)$. 
Then $(y_n)_n$ is a critical sequence if and only if 
$\tfrac{x_n}{y_n} \to 1$ as $n \to \infty$.
\end{prop}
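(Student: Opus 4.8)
The plan is to exploit monotonicity of the partition function in $x$ together with the multiplicative structure of the critical definition. Recall that for each fixed graph $G_n$ the map $x \mapsto Z_{o_n,G_n}(x)$ is a polynomial with nonnegative coefficients and constant term $1$, hence strictly increasing and continuous on $(0,\infty)$. The key observation is that criticality is really a statement about the \emph{scale} at which $Z_{o_n,G_n}$ transitions from bounded to unbounded: the definition says precisely that scaling $x_n$ up by any fixed factor $1+\eps$ pushes $\liminf Z_{o_n,G_n}$ to $\infty$, while scaling down by $1-\eps$ keeps $\limsup Z_{o_n,G_n}$ finite.

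First I would prove the ``if'' direction. Suppose $(x_n)_n$ is critical and $y_n/x_n \to 1$. Fix $0<\eps<1$. Choose $\eps'<\eps$ with $(1+\eps') < (1+\eps)$ (say $\eps' = \eps/2$); for $n$ large enough, $y_n(1+\eps) \geq x_n(1+\eps')$, so by monotonicity $Z_{o_n,G_n}(y_n(1+\eps)) \geq Z_{o_n,G_n}(x_n(1+\eps'))$, and the right side tends to $\infty$ along a subsequence — in fact $\liminf \to \infty$ — because $(x_n(1+\eps'))_n$ is super-critical. Hence $(y_n(1+\eps))_n$ is super-critical. Symmetrically, $y_n(1-\eps) \leq x_n(1-\eps/2)$ for large $n$, so $Z_{o_n,G_n}(y_n(1-\eps)) \leq Z_{o_n,G_n}(x_n(1-\eps/2))$, which is bounded since $(x_n(1-\eps/2))_n$ is sub-critical; thus $(y_n(1-\eps))_n$ is sub-critical. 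So $(y_n)_n$ is critical.

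For the ``only if'' direction, suppose $(x_n)_n$ and $(y_n)_n$ are both critical but $x_n/y_n \not\to 1$. Then, after passing to a subsequence, either $x_n/y_n \geq 1+\eps$ for some fixed $\eps>0$ and all $n$ in the subsequence, or $x_n/y_n \leq 1-\eps$. In the first case, along this subsequence $x_n \geq y_n(1+\eps)$, so $Z_{o_n,G_n}(x_n) \geq Z_{o_n,G_n}(y_n(1+\eps)) \to \infty$, contradicting that $(x_n)_n$ is critical (a critical sequence has $\limsup Z_{o_n,G_n}(x_n(1-\eps'))<\infty$ for every $\eps'$, and by continuity/monotonicity one shows $Z_{o_n,G_n}(x_n)$ itself cannot blow up — more carefully: $(x_n)_n$ critical forces $(x_n)_n$ itself to be, in particular, such that $(x_n(1-\eps'))$ is sub-critical for all $\eps'$, and since $x_n \geq y_n(1+\eps)$ we get $x_n(1-\eps') \geq y_n(1+\eps)(1-\eps') \geq y_n(1+\eps/2)$ for $\eps'$ small, so $Z_{o_n,G_n}(x_n(1-\eps'))\to\infty$, contradiction). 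The second case, $x_n/y_n \leq 1-\eps$, is symmetric, using that $(y_n(1-\eps))_n$ is sub-critical while $x_n \leq y_n(1-\eps)$ would force $\limsup Z_{o_n,G_n}(x_n(1+\eps'))<\infty$ to fail.

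The main obstacle — and the only place one must be slightly careful — is bookkeeping the subsequences and the chain of inequalities relating $y_n(1\pm\eps)$ to $x_n(1\pm\eps')$ so that the gain factor strictly survives in the limit; this is where one uses that $y_n/x_n\to 1$ quantitatively (to make $y_n(1+\eps)\geq x_n(1+\eps/2)$ eventually) versus, in the converse, that a fixed multiplicative gap $\eps$ cannot be absorbed by any $1\pm\eps'$ with $\eps'$ small. Everything else is immediate from monotonicity of $Z_{o_n,G_n}(\cdot)$ and the definitions of sub- and super-criticality. $\qed$
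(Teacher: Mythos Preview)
Your proposal is correct and follows essentially the same approach as the paper: monotonicity of $Z_{o_n,G_n}(\cdot)$ plus the observation that a fixed multiplicative gap between $x_n$ and $y_n$ lets one sandwich $x_n(1-\eps')$ above $y_n(1+\eps'')$ (or vice versa), forcing a contradiction between sub-criticality of one scaled sequence and super-criticality of the other. The paper's write-up is slightly tidier in the ``only if'' direction---it chooses $\eps$ so that $\tfrac{x_{n_k}}{y_{n_k}} \geq \tfrac{1+\eps}{1-\eps}$ directly, giving $x_{n_k}(1-\eps) \geq y_{n_k}(1+\eps)$ in one step---whereas you have a brief false start (the remark about $Z_{o_n,G_n}(x_n)$ itself not blowing up is neither needed nor quite right) before landing on the correct argument via $x_n(1-\eps') \geq y_n(1+\eps/2)$; but the substance is the same.
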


Another result is a characterization of critical sequences in terms of the intersection probability 
function $I$.

\begin{thm}
\label{thm: critical and I}
For $\SAW$ on a sequence of transitive graphs $G_n$ with $|G_n| \nearrow \infty$,
and a sequence $(x_n)_n$:
\begin{itemize}
\item $(x_n)_n$ is super-critical if and only if $\limsup_n I(x_n,G_n) = 0$.
\item $(x_n)_n$ is sub-critical if and only if $\liminf_n I(x_n,G_n) > 0$.
\end{itemize}
\end{thm}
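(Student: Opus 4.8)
The plan is to run everything through Proposition~\ref{prop: intersection}. Write $Z_n := Z_{G_n}(x_n)$, $L_n := L(x_n,G_n)$ and $I_n := I(x_n,G_n)$, so that the Proposition reads $L_n+1 = I_n Z_n$. Since $L_n\geq 0$, since $Z_n\geq 1$ (the empty walk alone contributes $x_n^0=1$ to the partition function), and since $I_n$ is a probability and hence $\leq 1$, this identity already gives the two-sided bound $Z_n^{-1}\leq I_n = (L_n+1)/Z_n\leq 1$. From it, two of the four implications are immediate. If $\limsup_n I_n = 0$, equivalently $I_n\to 0$, then $Z_n\geq I_n^{-1}\to\infty$, so $\liminf_n Z_n = \infty$ and $(x_n)_n$ is super-critical. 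If $(x_n)_n$ is sub-critical, so $Z_n\leq C$ for all large $n$, then $I_n\geq Z_n^{-1}\geq C^{-1}$, so $\liminf_n I_n\geq C^{-1}>0$.

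What remains are the two reverse implications: super-criticality forces $I_n\to 0$, and $\liminf_n I_n>0$ forces sub-criticality. Through $I_n=(L_n+1)/Z_n$ both become statements comparing the expected length $L_n$ with the partition function $Z_n$: for the first it is enough that $Z_n\to\infty$ imply $L_n+1 = o(Z_n)$, and for the second, which is formally stronger, that $L_n+1\geq\delta Z_n$ along a subsequence force the corresponding $Z_{n_k}$ to stay bounded. I would attack both through the analytic structure of the partition function. Note that $Z_{o,G}$ is a power series with non-negative coefficients (a polynomial when $G$ is finite) and that $L(x,o,G) = x\,\tfrac{d}{dx}\log Z_{o,G}(x)$; hence, by Theorem~\ref{thm: L increasing}, in the variable $u=\log x$ the function $u\mapsto\log Z_{o,G}(e^u)$ has non-decreasing slope $L$ and so is convex and increasing. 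Integrating this slope across a window $[\log x_n+\log(1-\eps),\,\log x_n]$ or $[\log x_n,\,\log x_n+\log(1+\eps)]$ converts a lower bound on $L_n$ into quantitative growth of $Z_{G_n}$ over that window, which can then be played off against the sub-/super-criticality of the shifted sequences $(x_n(1\pm\eps))_n$ and against the a priori bound $|\SAW_k(o,G_n)|\leq\Delta(\Delta-1)^{k-1}$ coming from the degree $\Delta$.

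The step I expect to be the real obstacle is precisely this comparison of $L_n$ with $Z_n$. Convexity and monotonicity of $\log Z$ on their own cannot rule out $L_n$ being a positive fraction of $Z_n$: a partition function whose mass concentrates at a single length scale $\ell_n$ has $L_n\approx\ell_n$, and $\ell_n$ can itself be of order $Z_n$. So the argument must feed in the geometry of the graphs $G_n$ --- the branching forced by the large girth --- in order to spread the partition-function mass over a range of lengths long enough to dominate the typical length. Isolating the precise structural input and making it quantitative is where the content lies; the two easy implications above are then just bookkeeping around Proposition~\ref{prop: intersection}.
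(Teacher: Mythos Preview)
Your two easy implications, via the sandwich $Z_n^{-1}\le I_n=(L_n+1)/Z_n\le 1$, are exactly what the paper does. For the hard direction (super-critical $\Rightarrow I_n\to 0$) the paper also runs through the convexity/Jensen idea you propose: from $e^{\alpha L(x,G)}\le \E_{x,G}[e^{\alpha|\o|}]=Z_G(xe^\alpha)/Z_G(x)$ with $\alpha=\log(1-\eps)<0$ it asserts
\[
L(x_n,G_n)+1\ \le\ 1+\frac{\log Z_{G_n}(x_n)-\log Z_{G_n}(x_n(1-\eps))}{-\log(1-\eps)}
\]
and then divides by $Z_{G_n}(x_n)\to\infty$. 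But Jensen with $\alpha<0$ yields, after dividing by the negative $\alpha$, the \emph{reverse} inequality
\[
L(x_n,G_n)\ \ge\ \frac{\log Z_{G_n}(x_n)-\log Z_{G_n}(x_n(1-\eps))}{-\log(1-\eps)} ;
\]
the paper's displayed bound points the wrong way. So your instinct that convexity by itself cannot force $L_n=o(Z_n)$ is correct, and the paper's argument does not establish the hard implication.

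Your suspicion that extra geometric input is genuinely required is in fact vindicated by explicit examples. On the $n$-cycle $C_n$ with $x_n\equiv 1$ one has $|\SAW_k|=2$ for $1\le k\le n-1$, hence $Z_n=2n-1\to\infty$ (so $(x_n)$ is super-critical), while $L_n=n(n-1)/(2n-1)$ and therefore $I_n=(L_n+1)/Z_n\to\tfrac14>0$. The paper's own mean-field sequence shows the same phenomenon: on $K_n$ at $x_n=1/(n-1)$ one has $L_n\sim\alpha\sqrt{n-1}$ (Lemma~\ref{lem: critical MF}) and $Z_n\asymp\sqrt{n}\to\infty$, so $I_n$ tends to a positive constant although the sequence is super-critical. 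Thus the ``real obstacle'' you isolated is not a defect of your plan but a missing hypothesis in the statement; without some branching assumption ruling out concentration of the partition-function mass at a single length scale, neither hard implication can hold.
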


This theorem is not surprising, since we expect that super-critical paths should be ``long'', so that a trivial intersection
has small probability, and sub-critical paths are expected to be short, 
so there is a reasonable probability of a trivial intersection.

%
%


%
%

\subsection{Mean field}

The main purpose of this paper is to analyze the finite graph model.
We begin with the simplest model, the complete graph.

\begin{thm}
\label{thm: mean field}
Consider the $\SAW$ model on the sequence $(G_n)_n$
where $G_n$ is the complete graph on $n$ vertices.
Then, $(\tfrac1{n})_n$ is a critical sequence.

Moreover, for all $\eps>0$: 
\begin{itemize}
\item If $(x_n)_n$ is a sequence such that 
$\lim_{n \to \infty}  (n x_n) = 1+\eps , $
then  
$$ \lim_{n \to \infty} \frac{ L(x_n,G_n)  }{ n } = \frac{\eps}{1+\eps} 
\AND \lim_{n \to \infty} I(x_n,G_n) = 0 . $$

\item If $(x_n)_n$ is a sequence such that $\lim_{n \to \infty} (n x_n) = 1-\eps$ then,
$$ \lim_{n \to \infty} L(x_n,G_n) = \frac{1-\eps}{\eps}  \AND \lim_{n \to \infty} I(x_n,G_n) = 1 . $$

\item If $x_n = \tfrac1{n}$ then there exists some (explicit) universal constant $\alpha> 0$ such that
$$ \lim_{n \to \infty} \frac{ L(x_n,G_n) }{ \sqrt{n} } = \alpha . $$
\end{itemize}
\end{thm}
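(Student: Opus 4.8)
The plan is to compute everything explicitly on the complete graph $K_n$ and then extract asymptotics. A self-avoiding walk in $K_n$ started at $o$ of length $k$ is just an ordered sequence of $k$ distinct vertices chosen from the remaining $n-1$, so $|\SAW_k(o,K_n)| = \frac{(n-1)!}{(n-1-k)!}$. Hence
\begin{align}
Z_{K_n}(x) = \sum_{k=0}^{n-1} \frac{(n-1)!}{(n-1-k)!}\, x^k.
\end{align}
First I would write $x_n = c/n$ and substitute $j = k$, giving terms $\prod_{i=1}^{k}(n-i)\cdot (c/n)^k \approx c^k$ for $k$ of constant order (with corrections $e^{-k^2/(2n)}$ that are negligible unless $k \gtrsim \sqrt n$). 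So for $c = 1+\eps$ or $c = 1-\eps$ fixed, $Z_{K_n}(c/n) \to \sum_k c^k$ — which diverges for $c>1$ (super-critical) and equals $\frac{1}{1-c}$ for $c<1$ (sub-critical) — establishing that $(1/n)_n$ is critical and handling the $I$-limits via Theorem~\ref{thm: critical and I}, while the $L$-limits in the off-critical regimes follow from the same geometric-series computation applied to $L+1 = Z'\cdot x / Z$... actually more directly from $L(x,G)+1 = I(x,G)Z_G(x)$ together with the limiting values of $I$ and $Z$: for $c=1-\eps$, $L \to (1/\eps)\cdot\frac{1}{1-\eps}-1 \cdot$(careful bookkeeping) $= \frac{1-\eps}{\eps}$; for $c = 1+\eps$ I would instead use the explicit formula $L(x_n,G_n) = \frac{\sum_k k\,|\SAW_k|x_n^k}{\sum_k |\SAW_k|x_n^k}$ and show the sum concentrates at $k \approx \frac{\eps}{1+\eps}n$ by a Laplace/saddle-point argument on the summand $\exp(k\log c - k^2/(2n) + \dots)$, whose exponent is maximized at $k^* = \frac{\log c}{1/n} \wedge (n-1)$; one checks $k^*/n \to \frac{\eps}{1+\eps}$ after accounting for the constraint $k \le n-1$ and the Gaussian correction, and that the mass outside an $O(\sqrt{n\log n})$ window of $k^*$ is negligible.

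The third bullet, the critical case $x_n = 1/n$, is where the real work lies. Here $|\SAW_k(o,K_n)| n^{-k} = \prod_{i=1}^{k}(1 - i/n)$, so $Z_{K_n}(1/n) = \sum_{k=0}^{n-1}\prod_{i=1}^k (1-i/n)$. Writing $\prod_{i=1}^k(1-i/n) \approx \exp(-k^2/(2n))$ uniformly for $k = O(n^{2/3})$ (with a rigorous two-sided bound using $\log(1-t) = -t - t^2/2 - \cdots$), this sum is a Riemann sum for $\int_0^\infty e^{-u^2/2}\,du \cdot \sqrt n = \sqrt{\pi/2}\cdot\sqrt n$, so $Z_{K_n}(1/n) \sim \sqrt{\pi n/2}$. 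Similarly $\sum_k k\,|\SAW_k|n^{-k} \sim \sqrt n \int_0^\infty (\sqrt n\, u) e^{-u^2/2}\,du = n\int_0^\infty u e^{-u^2/2}\,du = n$. Therefore
\begin{align}
L(1/n, K_n) = \frac{\sum_k k\,|\SAW_k(o,K_n)|\,n^{-k}}{Z_{K_n}(1/n)} \sim \frac{n}{\sqrt{\pi n/2}} = \sqrt{\tfrac{2}{\pi}}\,\sqrt n,
\end{align}
giving the explicit constant $\alpha = \sqrt{2/\pi}$.

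The main obstacle is making the Riemann-sum approximations uniform: I need to control $\prod_{i=1}^k(1 - i/n)$ both where $k = o(\sqrt n)$ (the product is $\approx 1$), where $k \asymp \sqrt n$ (the product is $\Theta(1)$ and gives the integral), and where $k \gg \sqrt n$ (the product decays faster than $e^{-k^2/(2n)}$, so the tail is negligible — but one must confirm the cubic correction $-k^3/(6n^2)$ only helps). The cleanest route is a dominated-convergence / squeeze argument: bound $\prod_{i=1}^k(1-i/n) \le e^{-k(k+1)/(2n)}$ for all valid $k$ for the upper bound on tails, and $\prod_{i=1}^k(1-i/n) \ge e^{-k^2/(2n) - C k^3/n^2}$ for $k \le n^{2/3}$ for the lower bound, then push both through and match with $\int_0^\infty e^{-u^2/2}\,du$. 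The same estimates, with an extra factor of $k/\sqrt n \to u$, deliver the numerator. I would present these bounds as a short lemma and then the theorem follows by assembling the pieces; the off-critical bullets are comparatively routine given the geometric series and Proposition~\ref{prop: intersection}.
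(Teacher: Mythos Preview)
Your approach is genuinely different from the paper's. The paper's key device is the identity
\[
\Pr_{x,K_n}[|\omega| = n-1-k] \;=\; \Pr[P = k \mid P \le n-1], \qquad P \sim \mathrm{Poi}(x^{-1}),
\]
so that $L(x,K_n) = n-1 - \E[P \mid P \le n-1]$. From this the critical case follows by the CLT for Poisson (yielding $\alpha = \E|N| = \sqrt{2/\pi}$, the same constant you obtain), the super-critical case by Poisson concentration/large deviations, and the sub-critical case by a short direct estimate. Your direct Riemann-sum/Laplace analysis of $c^k\prod_{i=1}^k(1-i/n)$ bypasses this probabilistic identity entirely and is arguably more elementary; it also produces $\alpha$ explicitly without invoking a limit theorem. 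Both routes are short once the right object is identified.

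There is, however, a real slip in your super-critical sketch. The approximation $\prod_{i=1}^k(1-i/n) \approx e^{-k^2/(2n)}$ is valid only for $k = o(n^{2/3})$; for $k = \beta n$ with $\beta\in(0,1)$ fixed you need the full exponent $\sum_{i=1}^k \log(1-i/n) \approx n\int_0^\beta \log(1-t)\,dt = -n\bigl[(1-\beta)\log(1-\beta) + \beta\bigr]$. Maximizing $\beta\log c - (1-\beta)\log(1-\beta) - \beta$ over $\beta$ then gives $\beta^* = 1 - 1/c = \eps/(1+\eps)$, which is the correct answer --- but your stated maximizer $k^* = n\log c$ from the quadratic approximation is \emph{not} equal to $\tfrac{\eps}{1+\eps}n$ (e.g.\ $\eps=1$ gives $\log 2 \approx 0.69$ versus $0.5$), and neither the constraint $k\le n-1$ nor any ``Gaussian correction'' repairs this. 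Once you use the full rate function your Laplace argument does go through. A smaller second point: in the sub-critical case, Theorem~\ref{thm: critical and I} only yields $\liminf I > 0$, not $I\to 1$, so you cannot deduce $L$ from $L+1=IZ$ as written; but the same geometric-series estimate that gives $Z\to 1/\eps$ also gives $\sum_k k\,c^k \prod_i(1-i/n) \to c/(1-c)^2$ directly, whence $L\to (1-\eps)/\eps$, and then $I = (L+1)/Z \to 1$ follows.
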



\begin{thm} \label{thm: gamma MF}
Consider the $\SAW$ model on the sequence $(G_n)_n$
where $G_n$ is the complete graph on $n$ vertices.   For critical and sub-critical sequences
$(x_n)_n$ such that $\limsup_{n \to \infty} (n x_n) \leq 1$,
the exponent $\gamma$ has the mean field value
$$ \lim_{n \to \infty} \gamma(x_n,G_n) = 1 . $$

For a super-critical sequence $(n x_n) \to 1+\eps$, we have 
$$ \lim_{n \to \infty} \gamma(x_n,G_n) = \infty. $$
\end{thm}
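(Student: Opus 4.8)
The plan is to reduce everything to the asymptotics of the partition function $Z_{G_n}(x_n)$, the expected length $L(x_n,G_n)$, and the intersection probability $I(x_n,G_n)$ on the complete graph $K_n$, and then feed these into the definition $\gamma(x,G) = \log Z_G(x) / \log(L(x,G)+1)$. The first step is to write down $Z_{G_n}(x)$ exactly: a self-avoiding walk of length $k$ from a fixed root in $K_n$ corresponds to an ordered choice of $k$ distinct vertices among the remaining $n-1$, so $|\SAW_k(o,K_n)| = (n-1)(n-2)\cdots(n-k) = \frac{(n-1)!}{(n-1-k)!}$ for $0 \le k \le n-1$, and hence $Z_{G_n}(x) = \sum_{k=0}^{n-1} \frac{(n-1)!}{(n-1-k)!} x^k$. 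Writing $x = x_n$ with $nx_n \to \lambda$, one analyzes the terms $a_k := \frac{(n-1)!}{(n-1-k)!}x_n^k$; the ratio $a_{k+1}/a_k = (n-1-k)x_n$ governs where the sum concentrates. For $\lambda < 1$ the terms decay geometrically from $k=0$ and $Z \to \frac{1}{1-\lambda}$ (a convergent geometric-type series), giving $Z_{G_n}(x_n) = \Theta(1)$; for $\lambda = 1$ the sum is dominated by a window of width $\sim\sqrt n$ around $k \sim \sqrt n$ and one gets $Z_{G_n}(x_n) = e^{\Theta(\sqrt n)}$, i.e. $\log Z_{G_n}(x_n) = \Theta(\sqrt n)$; for $\lambda = 1+\eps$ the sum runs essentially all the way to $k = n-1$ and $Z_{G_n}(x_n)$ grows like $e^{\Theta(n)}$, so $\log Z_{G_n}(x_n) = \Theta(n)$. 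Much of this is already encoded in Theorem~\ref{thm: mean field}, which I am allowed to assume, so I will quote it for the $L$ asymptotics and only need the (simple) $Z$ asymptotics here.

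Next I handle the denominator $\log(L(x_n,G_n)+1)$ using Theorem~\ref{thm: mean field}: in the subcritical regime $nx_n \to 1-\eps$ we have $L(x_n,G_n) \to \frac{1-\eps}{\eps}$, a finite positive constant, so $\log(L+1) \to \log\frac{1}{\eps}$, a finite positive constant; in the critical regime $x_n = 1/n$ we have $L(x_n,G_n) \sim \alpha\sqrt n$, so $\log(L+1) = \tfrac12\log n + O(1)$; in the supercritical regime $nx_n \to 1+\eps$ we have $L(x_n,G_n)/n \to \frac{\eps}{1+\eps}$, so $\log(L+1) = \log n + O(1)$. Combining with the numerator asymptotics: in the subcritical case $\gamma(x_n,G_n) = \frac{\Theta(1)}{\Theta(1)}$ — here I need to be a little more careful than "$\Theta$" because I want the limit to be exactly $1$, not merely bounded, so I will extract the precise constants. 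Using Proposition~\ref{prop: intersection}, $L(x_n,G_n)+1 = I(x_n,G_n)\cdot Z_{G_n}(x_n)$, so $\log(L+1) = \log I + \log Z$. Since in the subcritical and critical cases $I(x_n,G_n)$ converges to a nonzero constant ($1$ in the strictly subcritical case by Theorem~\ref{thm: mean field}; for a general critical sequence with $\limsup nx_n \le 1$ one argues $I$ stays bounded away from $0$, e.g. via Theorem~\ref{thm: critical and I} together with the fact that a critical sequence is sub-critical after scaling by $1-\eps$ — actually the cleanest route is: $\log(L+1)/\log Z = 1 + \log I/\log Z \to 1$ provided $\log Z \to \infty$ and $\log I = o(\log Z)$). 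This is the key simplification: $\gamma(x_n,G_n) = \frac{\log Z}{\log Z + \log I} = \frac{1}{1 + \log I/\log Z}$.

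So the real content is: (i) in the critical / subcritical-with-$\limsup nx_n\le 1$ case, show $\log Z_{G_n}(x_n) \to \infty$ OR handle the bounded case directly, and show $\log I(x_n,G_n) = o(\log Z_{G_n}(x_n))$; (ii) in the supercritical case $nx_n \to 1+\eps$, show $\log Z_{G_n}(x_n) = \Theta(n)$ while $\log(L(x_n,G_n)+1) = \log n + O(1)$, so the ratio $\to\infty$. For (i): if the sequence is strictly subcritical ($nx_n \to 1-\eps$) then $Z \to \frac1{1-\eps}$ and $L+1 \to \frac1\eps$ are both finite constants, so I should not divide by $\log Z$ (it does not tend to infinity); instead I compute the limit of $\gamma$ directly as $\frac{\log Z_\infty}{\log(L_\infty+1)}$ and verify via Proposition~\ref{prop: intersection} that $L_\infty + 1 = I_\infty Z_\infty$ with $I_\infty = 1$, forcing $L_\infty+1 = Z_\infty$ and hence $\gamma \to 1$ exactly. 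For a general critical sequence (where $nx_n$ may itself tend to $1$ and $Z$ may diverge), I split: either $\limsup nx_n < 1$, reduce to the constant case; or there is a subsequence with $nx_n \to 1$, along which $\log Z \to \infty$ and I need $\log I = o(\log Z)$ — here $I \le 1$ always so $\log I \le 0$, and I need a lower bound $\log I \ge -o(\sqrt n)$, which follows because $I(x_n,G_n) \ge$ (probability both walks have length $0$) is too weak, so instead I use $L(x_n,G_n)+1 = I Z$ to write $\log I = \log(L+1) - \log Z = O(\log n) - \Theta(\sqrt n) = -\Theta(\sqrt n)$ — wait, that is the same order as $\log Z$, so I must instead note $\log Z \ge c\sqrt n$ while $\log(L+1) = O(\log n)$ gives $\gamma = \frac{\log Z}{O(\log n)} \to \infty$?! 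That would contradict the claim. Let me reconsider: the resolution is that for a critical sequence in the complete-graph sense, $Z$ need \emph{not} diverge — criticality means $Z(x_n(1+\eps))\to\infty$ and $Z(x_n(1-\eps))$ bounded, which is consistent with $Z(x_n)$ itself bounded (indeed $x_n = \frac{1-c/n}{n}$-type sequences are critical with $Z$ bounded, while $x_n=1/n$ is critical with $Z = e^{\Theta(\sqrt n)}$). So the critical case genuinely subdivides, and in every sub-case I must check $\gamma \to 1$: when $Z$ stays bounded I use the exact-constant argument; when $Z \to \infty$ along a subsequence I must show $\log(L+1) = \log Z + o(\log Z)$, i.e. $\log I = o(\log Z)$, equivalently $I$ does not decay too fast — and here Theorem~\ref{thm: critical and I} gives $\liminf I > 0$ for sub-critical and we are critical not sub-critical, so I should instead directly estimate $I(x_n,G_n)$ on $K_n$ for such sequences and show $I = n^{-o(1)}$ at worst, which combined with $\log Z = \Theta(\sqrt n)$ gives $\log I/\log Z \to 0$. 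The main obstacle, then, is precisely this: pinning down the rate of decay of $I(x_n,G_n)$ for critical sequences on the complete graph finely enough to conclude $\log I = o(\log Z)$ in the divergent sub-case, and carefully enumerating the critical sub-cases; the supercritical case and the strictly-subcritical case are comparatively routine given Theorem~\ref{thm: mean field} and Proposition~\ref{prop: intersection}. I expect the cleanest writeup computes $Z_{G_n}(x_n)$, $L(x_n,G_n)$, $I(x_n,G_n)$ asymptotically by a unified saddle-point / Laplace-method analysis of $\sum_k \frac{(n-1)!}{(n-1-k)!}x_n^k$ and its first-moment analogue, so that all three regimes fall out of one calculation.
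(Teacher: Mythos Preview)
Your proposal contains a concrete computational error in the critical case that derails the argument. You claim that for $x_n = 1/n$ on $K_n$ the partition function satisfies $Z_{G_n}(x_n) = e^{\Theta(\sqrt n)}$, i.e.\ $\log Z = \Theta(\sqrt n)$. This is false: in fact $Z_{G_n}(x_n) = \Theta(\sqrt n)$, so $\log Z = \tfrac12\log n + O(1)$. Indeed, with $a_k = \frac{(n-1)!}{(n-1-k)!}n^{-k} = \prod_{j=1}^{k}(1-j/n) \approx e^{-k^2/(2n)}$, each term is at most $1$ and the sum is $\sum_k e^{-k^2/(2n)} \sim \sqrt{\pi n/2}$. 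Your own description (``a window of width $\sim\sqrt n$'') is consistent with $\sqrt n$ terms each of size $O(1)$, hence a sum of order $\sqrt n$, not $e^{\Theta(\sqrt n)}$. The contradiction you later encounter (``$\gamma = \log Z / O(\log n) \to \infty$?!'') is exactly this error surfacing; your proposed resolution (that $Z$ might stay bounded for some critical sequences while being $e^{\Theta(\sqrt n)}$ for others) is also incorrect---for any $x_n$ with $nx_n \to 1$ one has $Z = \Theta(\sqrt n)$---and the ``main obstacle'' you isolate (fine control of the decay rate of $I$) is a phantom.

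Once the error is corrected the critical case is immediate: $Z = \Theta(\sqrt n)$ and $L+1 = \Theta(\sqrt n)$ by Theorem~\ref{thm: mean field}, so $\gamma = \log Z/\log(L+1) \to 1$. The paper's argument is even cleaner and avoids computing any constant: from \eqref{eqn: partition function at 1} and \eqref{eqn: length at 1} one has $Z = \tfrac{d}{d-1}\bE_o[T]$ and $L+1 = \tfrac{\bE_o[T(T+1)]}{2\bE_o[T]} \ge \tfrac12\bE_o[T]$, while $L+1 \le Z$ always holds by Proposition~\ref{prop: intersection}; thus $\tfrac{d-1}{2d}Z \le L+1 \le Z$, and since $\bE_o[T] \to \infty$ the ratio of logarithms tends to $1$.

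Your strictly sub-critical argument (via $I \to 1$, forcing $L+1$ and $Z$ to share the same finite limit) is essentially the paper's. For the super-critical case your plan to show $\log Z = \Theta(n)$ directly by a Laplace-type estimate on $\sum_k \frac{(n-1)!}{(n-1-k)!}x_n^k$ is correct and would work; the paper instead applies the mean value theorem to $f(x) = \log Z_{G_n}(x)$, using $xf'(x) = L(x,G_n)$, to obtain $\log Z(x_n) - \log Z(z_n) \ge \tfrac{x_n-z_n}{x_n}L(z_n,G_n) = \Theta(n)$ for an intermediate super-critical $z_n$, bypassing the saddle-point computation entirely.
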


\subsection{Euclidean case}
It is interesting to compare our findings on the complete graph to the more classical Euclidean case.

When $G_n$ is the $n \times n$ torus $(\Z/n\Z)^2$, 
the above remarks on the planar self-avoiding walk lead us to the conclusion
that 
$$ L(x,G_n) = 
\begin{cases}
O(1) & x < \mu(\Z^2)^{-1} \\
\Omega(n^2) & x > \mu(\Z^2)^{-1} \\
n^{4/3} & x = \mu(\Z^2)^{-1} ,
\end{cases}
$$
where the last value in the critical case is only conjectured,
and has to do with the fact the the Hausdorff dimension of $\mathsf{SLE}_{\kappa}$ is $1+ \tfrac{\kappa}{8}$
for $\kappa \leq 8$.
(Note that now walks are not constrained to reach the boundary of the ball of radius $n$, as would be the case
in the original Lawler-Schramm-Werner model.)

For higher dimensions (that is, $(\Z/n\Z)^d$) 
the sub-critical and super-critical behavior should be the same as the planar case;
\ie constant for sub-critical and order of the volume in the super-critical case.
The results of Brydges \& Spencer 
\cite{BS85} and Hara \& Slade \cite{HaraSlade1,HaraSlade2, HaraSlade3}
tell us that the the number of self-avoiding walks in $\Z^d , d \geq 5$, started at $0$, of length $n$, 
is asymptotic to $A \mu^n$, 
for some constant $A>0$, and that a typical self-avoiding walk of length $n$ behaves like 
a random walk.  This leads us to expect that on the finite torus $(\Z/n\Z)^d , d \geq 5$ the self-avoiding walk
should not ``feel'' its own presence until reaching length $n^{d/2}$ (this is just the birthday paradox).
Thus, we expect that for $d \geq 5$ taking $G_n = (\Z/n\Z)^d$, we have that
$$ L(x_c, G_n) = \Theta (n^{d/2}) = \sqrt{ |G_n| } . $$
Here $x_c$ is the critical parameter $x_c= \mu(\Z^d)^{-1}$.

The complete graph together with the Euclidean case give two possible notions of ``mean field behavior'':
critical exponent $\gamma = 1$ or expected length $L$ of order square-root of the volume.
We turn to the large girth case to understand which of these is the correct picture when there is more 
exotic geometry involved.

\subsection{$\SAW$ on large girth}

Expander graphs are graphs with very good expansion properties.
It is known that a random $d$-regular graph is a very good expander with high probability,
and has large girth around typical vertices with high probability.
In many models random $d$-regular graphs exhibit the same behavior as the mean-field complete graph
case. Sometimes such mean-field behavior can also be shown for expanders of large girth in general.
See the discussion immediately following Theorem \ref{thm: large girth}.

This experience may lead one to conjecture that the same phenomena will occur 
for the $\SAW$ model. 
The sub-critical and super-critical cases indeed do have constant and linear order expected length respectively.
However, the analogy breaks down 
for the expected length $L$ in the critical case, and the scaling is not $\sqrt{|G_n|}$.
That being said, a different measure of ``mean field'' is the critical exponent $\gamma$.
In this case we will show that $\gamma=1$ for graphs of large girth. 

Instrumental in the proofs are non-backtracking random walks.
A {\bf non-backtracking walk} is a path in a graph
that never backtracks the last edge it passed through.
A non-backtracking random walk is one that chooses each step randomly out of the currently
allowed steps.  This is a Markov chain on the set of directed edges of the graph.
For a non-backtracking random walk on a graph $G$ one may define the mixing time by
$$ \tau = \min \set{ t \ : \ \max_{u,v} | \Pr_u [\NB(t) = v ] - \tfrac1{|G|} | \leq \tfrac{1}{2|G|} } , $$
where $\NB$ is the non-backtracking random walk, and $\Pr_v$ is the associated probability measure 
conditioned on $\NB(0)=v$. In \cite{ABLS07} the mixing time of non-backtracking random walks
is studied, and it is shown that it is always better than the mixing time of the usual simple random walk.
We use the mixing time of the non-backtracking random walk to quantitatively define the notion of ``large girth''.

\begin{thm}
\label{thm: large girth}
Let $(G_n)_n$ be a sequence of $d$-regular transitive graphs with sizes $|G_n| \nearrow \infty$.
Let $g_n$ be the girth of $G_n$ and assume that $g_n \to \infty$ as $n \to \infty$.
Consider the $ \SAW$ model on the sequence $(G_n)_n$.

Then, $\tfrac1{d-1}$ is a critical (constant) sequence, and we have:
\begin{itemize}
\item If $x_n \to x$ for $x< (d-1)^{-1}$ then
$$ \lim_{n \to \infty} L(x_n,G_n) = \frac{(d-1) x}{1-(d-1)x} \AND \lim_{n \to \infty} I(x_n,G_n) = 
1- \tfrac{1}{d}  . $$

\item If $x = (d-1)^{-1}$ then,
$$ C^{-1} g_n \leq L(x,G_n) \leq C \cdot g_n (d-1)^{2 g_n}  $$
where $C>0$ is a constant that depends only on the degree $d$.

\item Assume that $G_n$ has large girth in the sense that if $\tau_n$
is the mixing time of the non-backtracking random walk on $G_n$,
then $\tau_n = o((d-1)^{g_n/4} )$ as $n \to \infty$.

If $x_n \to x > (d-1)^{-1}$ then there exists $c = c(x,d) > 0$ such that
$$ \liminf_{n \to \infty} \frac{ L(x_n,G_n) }{|G_n|} \geq c \AND \lim_{n \to \infty} I(x_n,G_n) = 0 . $$
\end{itemize}
\end{thm}

Note that one may find expander graphs $(G_n)_n$ with girth as large as $g_n = \eps \log |G_n|$ for 
any $\eps>0$. 
By results of \cite{ABLS07}, the mixing time of the non-backtracking random walk 
is faster than the mixing time of the simple random walk on $G_n$.
Choosing $(G_n)_n$ as these large-girth expander graphs, the mixing time in this case is $\tau_n = O(\log |G_n|)$.
So in Theorem \ref{thm: large girth} the critical expected length could be bounded by 
order $|G_n|^\delta$ for any choice of $\delta>0$ (and in fact may even be poly-logarithmic).
For example, it is well known that the above conditions hold for the 
constructions by Margulis \cite{Mar82} and the
Lubotzky-Phillips-Sarnak expanders \cite{LPS88}.

Occasionally one expects expanders, especially large girth expanders, 
to exhibit tree-like or mean-field behavior.
To illustrate this, let us contrast our results with some other mean-field behavior in the large girth setting.
In \cite{Nach09} finite graphs with large girth are shown to exhibit mean-field behavior of the critical window 
for bond percolation. Coincidently, the non-backtracking random walk appears as part of the conditions in that paper
as well. A more recent paper \cite{NP12} deals with the {\em infinitely many infinite components} phase in percolation
on infinite graphs with large girth, as well as the critical exponents for percolation and self-avoiding walks on such graphs.  
There it is shown that the exponents are mean-field exponents for large girth infinite graphs.
Again, there is a connection to the non-backtracking random walk, which appears in the proofs.

\subsection{Critical exponent in large girth}

As mentioned above,
although the expected length $L$ does not exhibit ``mean field behavior'' for large girth expanders, 
we may consider another measure of ``mean field behavior'', namely the ``critical exponent'' $\gamma$.
We show that $\gamma=1$ in the critical regime for graphs with large girth.  We also determine $\gamma$
for the sub- and super-critical regimes.

\begin{thm}
\label{thm: gamma}
For a sequence of $d$-regular transitive 
graphs $(G_n)_n$, of size $|G_n| \nearrow \infty$ and girth $g_n \to \infty$ as $n \to \infty$:

\begin{itemize}

\item
For the critical sequence $x_n = \tfrac{1}{d-1}$, the critical exponent $\gamma$ has the mean field value
$$ \lim_{n \to \infty} \gamma(x_n,G_n) =  1 . $$

\item
If $x_n \to x < \tfrac1{d-1}$ then 
$$ \lim_{n \to \infty} \gamma(x_n,G_n) = \frac{ \log \tfrac{d}{d-1} - \log (1-(d-1)x) }{
 - \log (1-(d-1)x) } \in (1,\infty) . $$

\item
If $x_n \to x > \tfrac1{d-1}$ and if the girth of $G_n$ is large enough so that 
$\tau_n = o((d-1)^{g_n/4})$ as $n \to \infty$ 
(where $\tau_n$ is the mining time of the non-backtracking random walk on $G_n$, 
as in the assumptions of 
Theorem \ref{thm: large girth}),
then 
\begin{align*}
\lim_{n \to \infty} \gamma(x_n,G_n) & = \infty .
\end{align*}

\end{itemize}
\end{thm}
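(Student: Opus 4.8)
The plan is to work throughout via the identity $L(x,G)+1 = I(x,G)\cdot Z_G(x)$ of Proposition~\ref{prop: intersection}, which turns the definition of $\gamma$ into
$$ \gamma(x,G) = \frac{\log Z_G(x)}{\log(L(x,G)+1)} = \frac{\log Z_G(x)}{\log Z_G(x) + \log I(x,G)} . $$
Since the length-$0$ walk contributes $1$ to $Z_G(x)$, always $Z_G(x)\ge 1$; since $L(x,G)\ge 0$, always $\log(L(x,G)+1)\ge 0$; and $0<I(x,G)\le 1$. Hence $0\le \log(L(x,G)+1) = \log Z_G(x) + \log I(x,G) \le \log Z_G(x)$, so $\gamma(x,G)\ge 1$ whenever $L(x,G)>0$ (which holds in every regime below). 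This supplies the lower bound $\gamma\ge 1$ for free, and reduces the sub-critical and critical statements to asymptotic upper bounds and the super-critical statement to a lower-order growth estimate for $\log Z_{G_n}(x_n)$.

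\textbf{Sub-critical case.} Because $G_n$ has girth $g_n$, every non-backtracking walk of length $k<g_n$ from $o_n$ is self-avoiding, so $|\SAW_k(o_n,G_n)| = d(d-1)^{k-1}$ for $1\le k<g_n$, while $|\SAW_k(o_n,G_n)|\le d(d-1)^{k-1}$ for all $k$. I would combine this with the geometric tail bound $\sum_{k\ge g_n} d(d-1)^{k-1}x_n^k\to 0$ (valid since $x_n\to x<(d-1)^{-1}$ and $g_n\to\infty$) to get $Z_{G_n}(x_n)\to Z_{T_d}(x)$, where $T_d$ is the $d$-regular tree and $Z_{T_d}$ is an explicit rational function of $x$, finite for $x<(d-1)^{-1}$; applying the same estimate after weighting the $k$-th term by $k$ gives $L(x_n,G_n)\to L_{T_d}(x)$, and hence $I(x_n,G_n)\to (L_{T_d}(x)+1)/Z_{T_d}(x)\in(0,1)$. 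The displayed formula then yields $\gamma(x_n,G_n)\to \log Z_{T_d}(x)/\log(L_{T_d}(x)+1)$, which is $>1$ because the limiting intersection probability is $<1$, and $<\infty$ because $L_{T_d}(x)>0$; a direct computation on $T_d$ identifies this limit with the stated expression.

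\textbf{Critical case.} Here $x_n=(d-1)^{-1}$. By Theorem~\ref{thm: large girth}, $L((d-1)^{-1},G_n)\ge C^{-1}g_n\to\infty$, so $\log(L((d-1)^{-1},G_n)+1)\to\infty$, and in view of $\gamma\ge 1$ it is enough to show $\log\bigl(1/I((d-1)^{-1},G_n)\bigr) = o\bigl(\log(L((d-1)^{-1},G_n)+1)\bigr)$, equivalently $\log Z_{G_n}((d-1)^{-1})\le (1+o(1))\log(L((d-1)^{-1},G_n)+1)$. The plan is to lower-bound $I$ at criticality: for two independent walks with law $\Pr_{(d-1)^{-1},G_n}$, their first edges are (nearly) uniform on the $d$ edges at $o_n$ by vertex-transitivity, so with probability $\ge 1-\tfrac1d-o(1)$ they leave along different edges; conditioned on that, they stay inside disjoint branches of the radius-$\lfloor g_n/2\rfloor$ tree neighbourhood of $o_n$ and hence meet only at $o_n$, unless one of them has length $>\lfloor g_n/2\rfloor$ — and the probability that, after escaping that neighbourhood, it returns to meet the other has to be controlled. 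The quantitative point is that the length scale on which $k\mapsto |\SAW_k(o_n,G_n)|(d-1)^{-k}$ is macroscopic governs simultaneously $\log Z_{G_n}((d-1)^{-1})$ (all its terms being $\le \tfrac d{d-1}$) and the typical walk length, hence $L$, so the resulting loss is sub-logarithmic. I expect this uniform lower bound on $I$ to be the main obstacle: a priori $Z_{G_n}((d-1)^{-1})$ could be as large as $\Theta(|G_n|)$ while $L$ is only known to be $\ge C^{-1}g_n$, so the needed comparison must be extracted from the large-girth structure rather than from the coarse bounds of Theorem~\ref{thm: large girth} alone.

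\textbf{Super-critical case.} Now $x_n\to x>(d-1)^{-1}$ with $\tau_n = o((d-1)^{g_n/4})$. By Theorem~\ref{thm: large girth}, $\liminf_n L(x_n,G_n)/|G_n|\ge c>0$, while trivially $L(x_n,G_n)\le |G_n|$, so $\log(L(x_n,G_n)+1) = \log|G_n|+O(1)$, and $I(x_n,G_n)\to 0$. Since $\log Z_{G_n}(x_n) = \log(L(x_n,G_n)+1) + \log\bigl(1/I(x_n,G_n)\bigr)$, it remains to prove $\log\bigl(1/I(x_n,G_n)\bigr) = \omega(\log|G_n|)$. I would obtain this from a quantitative form of $I(x_n,G_n)\to 0$ available from the proof of Theorem~\ref{thm: large girth}: under the mixing hypothesis the non-backtracking walk equidistributes well before self-avoidance fails, which both exhibits $Z_{G_n}(x_n) = e^{\Omega(|G_n|)}$ (a weighting on macroscopic self-avoiding walks) and shows that two independent walks with law $\Pr_{x_n,G_n}$ fail to be disjoint except with probability $e^{-\Omega(|G_n|)}$. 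Either way $\log Z_{G_n}(x_n) = \Omega(|G_n|)$, so $\gamma(x_n,G_n)\ge \Omega(|G_n|)/O(\log|G_n|)\to\infty$.
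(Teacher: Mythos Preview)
Your sub-critical argument is correct and is exactly what the paper does (it just says ``plug in the values'' from the partition-function formulas, which amounts to your tree comparison).

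Your critical case has a genuine gap, and you yourself identify it: you need a lower bound on $I((d-1)^{-1},G_n)$ (equivalently, $\log Z \le (1+o(1))\log(L+1)$), and your sketch via ``disjoint branches of the tree neighbourhood'' does not close. The paper's route bypasses $I$ entirely by using the self-intersection time $T$ of the non-backtracking random walk. At $x=(d-1)^{-1}$ one has the exact identities
\[
Z_{G_n}\bigl(\tfrac1{d-1}\bigr)=\tfrac{d}{d-1}\,\bE_o[T]
\qquad\text{and}\qquad
L\bigl(\tfrac1{d-1},G_n\bigr)+1=\frac{\bE_o[T(T+1)]}{2\,\bE_o[T]}\ \ge\ \frac{\bE_o[T]+1}{2},
\]
the inequality being $\bE_o[T^2]\ge(\bE_o[T])^2$. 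Since $T\ge g_n\to\infty$ a.s., $\bE_o[T]\to\infty$, so both $\log Z$ and $\log(L+1)$ equal $\log\bE_o[T]+O(1)$, and $\gamma\to 1$. No control on $I$ is needed; the two quantities you are trying to compare are both governed by the \emph{same} moment $\bE_o[T]$. Your worry that ``$Z$ could be $\Theta(|G_n|)$ while $L$ is only $\ge C^{-1}g_n$'' cannot occur: the formula forces $L+1\ge\tfrac12 Z\cdot\tfrac{d-1}{d}$ up to a $+\tfrac12$.

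For the super-critical case your strategy is right but the justification is hand-wavy. The paper obtains $\log Z_{G_n}(x_n)=\Omega(|G_n|)$ cleanly via the mean value theorem applied to $f(x)=\log Z_{G_n}(x)$: since $xf'(x)=L(x,G_n)$, for $z_n=\tfrac12\bigl(\tfrac1{d-1}+x_n\bigr)$ one gets
\[
\log Z_{G_n}(x_n)-\log Z_{G_n}(z_n)\ \ge\ \tfrac{x_n-z_n}{x_n}\,L(z_n,G_n)\ \ge\ c\,|G_n|
\]
using the linear lower bound on $L$ at the (still super-critical) point $z_n$. Combined with $\log(L(x_n,G_n)+1)\le\log|G_n|+O(1)$ this gives $\gamma\to\infty$. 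Your alternative of reading off $Z_{G_n}(x_n)=e^{\Omega(|G_n|)}$ directly from the lower bound $\bP_o[T>k]\ge c\,e^{-\delta k}$ for $k\le\tfrac{\delta}{6}|G_n|$ would also work once you choose $\delta<\log((d-1)x)$, but you should spell that out rather than invoking ``a quantitative form of $I\to 0$''.
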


Recall that in the classical setting $I(x,G) \sim  c (1-x\mu)^{\gamma-1}$
(as in Section \ref{scn: critical exp}).  In our (large girth) setting the analogous quantity
for a sequence $x_n \to x < \tfrac{1}{d-1}$  is
$$ (1- (d-1) x_n)^{\gamma(x_n,G_n) -1} = 
\exp \big(  \log \tfrac{d}{d-1} \cdot \tfrac{ \log (1- (d-1) x_n) }{ -\log (1-(d-1)x ) } \big) 
\to \tfrac{d-1}{d} > 0 . $$
Also, by Theorem \ref{thm: large girth}, $I(x_n,G_n) \to \tfrac{d-1}{d}$.
This coincides nicely with the fact that $\gamma=1$ in the critical case.

\subsection{Some more general results}
\label{scn: general results}

Our analysis also provides some general results relating the trivial-intersection probability $I(x,o,G)$
to the expected length $L(x,o,G)$ and partition function $Z_{o,G}(x)$, for transitive graphs.
This relation also holds for infinite graphs (as long as the partition function converges).

\begin{prop}
\label{prop: intersection}
Suppose that $G$ is a transitive graph (finite or infinite).  
Let $x>0$ be such that $Z_G(x)$ converges (all $x>0$ in the finite graph case).
Then,
$$ L(x,G) + 1 = I(x,G) \cdot Z_G(x) . $$
\end{prop}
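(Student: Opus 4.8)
The plan is to expand the product $I(x,G) \cdot Z_G(x)$ directly from its definition and recognize the resulting sum as $L(x,G)+1$. Recall that
$$ I(x,G) \cdot Z_G(x) = Z_G(x) \cdot (Z_G(x))^{-2} \sum_{\o, \o'} x^{|\o|} x^{|\o'|} \ind{\o \cap \o' = \set{o}} = (Z_G(x))^{-1} \sum_{\o, \o'} x^{|\o| + |\o'|} \ind{\o \cap \o' = \set{o}}, $$
where the double sum ranges over pairs $(\o, \o') \in \SAW(o,G) \times \SAW(o,G)$. So the goal is to show that
$$ \sum_{(\o,\o')} x^{|\o|+|\o'|} \ind{\o \cap \o' = \set{o}} = (L(x,G)+1) \cdot Z_G(x) = \sum_{\eta \in \SAW(o,G)} (|\eta|+1) x^{|\eta|} . $$

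The key idea is a bijection: a pair of self-avoiding walks $(\o,\o')$ from $o$ that meet only at $o$ can be concatenated, by reversing $\o$ and gluing it to $\o'$ at the common endpoint $o$, to form a single self-avoiding walk $\eta = \ba{\o} \cdot \o'$ of length $|\o| + |\o'|$ passing through $o$. The vertex $o$ sits at position $|\o|$ along $\eta$, and any choice of a ``splitting point'' — i.e. any of the $|\eta|+1$ vertices of $\eta$ — recovers a unique such pair. Here transitivity is what makes this work: when we split $\eta$ at an interior vertex $v$, the two halves are walks started at $v$, not at $o$, but since $G$ is transitive the partition function and hence the weighted count of SAWs of each length started at $v$ equals that started at $o$. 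More precisely, I would argue that
$$ \sum_{(\o,\o')} x^{|\o|+|\o'|} \ind{\o \cap \o' = \set{o}} = \sum_{k,m \geq 0} |\SAW_k(o,G)| \cdot |\SAW_m(o,G)| \cdot x^{k+m} \cdot (\text{correction}), $$
but in fact it is cleaner to phrase it as: the left side counts triples $(\eta, j)$ where $\eta \in \SAW(o,G)$... no — to keep transitivity honest, the left side counts pairs of SAWs from a \emph{fixed} root, and the right side should be reorganized as a sum over SAWs $\eta$ through $o$ together with a marked vertex. Let me instead match coefficients: the coefficient of $x^N$ on the left is $\sum_{k+m=N}|\SAW_k(o,G)|\,|\SAW_m(o,G)|$, which by transitivity equals $\sum_{v \in \eta,\, \eta \in \SAW_N(\cdot)}$... the cleanest statement is that $\sum_{k+m=N} |\SAW_k(o,G)| |\SAW_m(o,G)|$ equals $(N+1) |\SAW_N(o,G)|$ \emph{on a transitive graph}, via the bijection $(\o,\o') \leftrightarrow (\eta, \text{position of } o)$ combined with re-rooting. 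Summing $x^N$ times this identity over $N$ gives exactly $(L(x,G)+1) Z_G(x)$, since $\sum_N (N+1)|\SAW_N(o,G)| x^N = \sum_N |\SAW_N(o,G)| x^N + \sum_N N |\SAW_N(o,G)| x^N = Z_G(x) + L(x,G) Z_G(x)$.

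The main obstacle, and the point requiring care, is the use of transitivity to justify that re-rooting a self-avoiding walk at an arbitrary one of its vertices does not change the weighted enumeration — i.e. that $|\SAW_k(v,G)| = |\SAW_k(o,G)|$ for every vertex $v$ and every $k$, which is immediate from transitivity but is exactly where the hypothesis is consumed. A secondary point is to verify the bijection is genuinely a bijection: given $\eta \in \SAW_N(o,G)$ with $o$ at position $j$ (note $0 \le j \le N$), splitting yields $\o = $ (reversal of $\eta$ restricted to $[0,j]$) and $\o' = \eta$ restricted to $[j,N]$, both starting at $\eta_j = o$, both self-avoiding, and intersecting only at $o$ because $\eta$ itself is self-avoiding; conversely concatenation of such a pair is self-avoiding precisely because $\o \cap \o' = \set{o}$. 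Everything else is bookkeeping of powers of $x$. I do not expect the convergence issue to cause trouble: in the infinite case all sums are absolutely convergent for $x < \mu^{-1}$ so rearrangement is legitimate, and in the finite case the sums are finite.
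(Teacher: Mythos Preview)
Your overall strategy is sound and shares the same combinatorial core as the paper: for transitive $G$, the set $S_{k,m}$ of pairs $(\o,\o')\in\SAW_k(o,G)\times\SAW_m(o,G)$ meeting only at $o$ is in bijection with $\SAW_{k+m}(o,G)$. After that point your route is actually more elementary than the paper's: you go straight from $\sum_{k+m=N}|S_{k,m}| = (N+1)|\SAW_N(o,G)|$ to $\sum_N(N+1)|\SAW_N(o,G)|x^N = Z_G(x) + L(x,G)\,Z_G(x)$, whereas the paper pushes the identity through the non-backtracking random walk formulas \eqref{eqn: partition function}--\eqref{eqn: length at 1} and performs a page of generating-function algebra in $\bE_o[y^T]$, with the case $y=1$ handled separately. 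Your computation avoids regularity and the quantity $T$ altogether.

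There are, however, two genuine slips to fix. First, you write that the coefficient of $x^N$ on the left is $\sum_{k+m=N}|\SAW_k(o,G)|\,|\SAW_m(o,G)|$: this drops the intersection constraint and is simply false (without the constraint the double sum equals $Z_G(x)^2$, not $(L+1)Z$; already for $K_n$ and $N=2$ the two sides differ). The correct coefficient is $\sum_{k+m=N}|S_{k,m}|$, and it is the bijection that gives $|S_{k,m}|=|\SAW_N(o,G)|$ for each of the $N+1$ choices of $(k,m)$. Second, your description of the inverse map is inconsistent: ``$\eta\in\SAW_N(o,G)$ with $o$ at position $j$'' is impossible for $j>0$, since $\eta_0=o$ and $\eta$ is self-avoiding. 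The concatenation $\ba\o\cdot\o'$ starts at the endpoint $\o_k$, not at $o$; to land back in $\SAW_N(o,G)$ you must compose with an automorphism sending $\o_k$ to $o$, exactly as the paper does --- and this, not the equality $|\SAW_k(v,G)|=|\SAW_k(o,G)|$, is where transitivity is consumed. Once these two points are made precise your argument is complete and cleaner than the paper's.
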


We also prove that the expected length $L$ is an increasing function of the parameter.
This result also holds for infinite graphs as long as the model is defined.

\begin{thm}
\label{thm: L increasing}
The function $x \mapsto L(x,o,G)$ is a non-decreasing function
(defined for all $x$ such that $Z_{o,G}(x) < \infty$).
\end{thm}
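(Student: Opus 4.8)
The plan is to show that the derivative $\frac{d}{dx} L(x,o,G) \geq 0$ by recognizing $L(x,o,G)$ as a suitably normalized moment and computing its derivative as a variance-type quantity. Write $Z(x) = Z_{o,G}(x) = \sum_n a_n x^n$ where $a_n = |\SAW_n(o,G)|$, so that $L(x) + 1 = \sum_n (n+1) a_n x^n / Z(x)$; equivalently, after multiplying numerator and denominator by $x$, $x(L(x)+1) = \frac{(xZ(x))'}{Z(x)} \cdot \frac{?}{?}$ — let me instead work directly. A cleaner route: let $N$ denote the random variable $|\o|$ under $\Pr_{x,o,G}$, so $L(x) = \E_{x,o,G}[N]$. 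Since $\Pr_{x,o,G}[\o] \propto x^{|\o|}$, differentiating the expectation of $N$ with respect to $x$ is a standard exponential-family computation: one finds
\begin{align*}
\frac{d}{dx} \E_{x,o,G}[N] = \frac{1}{x} \left( \E_{x,o,G}[N^2] - \left(\E_{x,o,G}[N]\right)^2 \right) = \frac{1}{x}\Var_{x,o,G}(N).
\end{align*}
Since $x > 0$ and a variance is nonnegative, this gives $\frac{d}{dx} L(x,o,G) \geq 0$ wherever $L$ is differentiable.

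First I would set up the differentiation carefully. Write $L(x) = \frac{\sum_n n a_n x^n}{\sum_n a_n x^n}$ and apply the quotient rule. The numerator of the derivative is $\left(\sum_n n^2 a_n x^{n-1}\right)\left(\sum_n a_n x^n\right) - \left(\sum_n n a_n x^n\right)\left(\sum_n n a_n x^{n-1}\right)$, which equals $x^{-1}\left[ \left(\sum n^2 a_n x^n\right)\left(\sum a_n x^n\right) - \left(\sum n a_n x^n\right)^2\right]$, and this bracket is exactly $Z(x)^2 \Var_{x,o,G}(N) \geq 0$ by Cauchy–Schwarz (or directly, expanding as $\tfrac12 \sum_{m,n}(m-n)^2 a_m a_n x^{m+n} \geq 0$). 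Dividing by $Z(x)^2 > 0$ and by the extra factor $x > 0$ yields the claim.

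The main obstacle is not the algebra but the analytic justification of term-by-term differentiation of the power series $Z(x)$, $\sum n a_n x^n$, and $\sum n^2 a_n x^n$, together with the assertion that $L$ is differentiable on the relevant range. In the finite-graph case this is trivial since all these are polynomials. In the infinite-graph case, the relevant range is $0 < x < R$ where $R$ is the radius of convergence of $Z$ (at least $\mu^{-1}$); a power series can be differentiated term by term in the open interior of its disk of convergence, and multiplying by $n$ or $n^2$ does not change the radius of convergence, so all three series converge locally uniformly on $(0,R)$ and $L$ is real-analytic there. One should also note that the hypothesis "$Z_{o,G}(x) < \infty$" in the statement should be read as $x$ lying in the interior of the domain of convergence (at an endpoint of convergence the derivative statement need not make sense); I would add a parenthetical remark to that effect. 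Finally, for the boundary/endpoint case $x = R$ if $Z(R) < \infty$, monotonicity still follows by taking a limit from the left using the monotone convergence already established on $(0,R)$, but I expect the paper only needs the interior statement.
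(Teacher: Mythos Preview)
Your proof is correct and is in fact more direct than the paper's. You compute $L'(x) = x^{-1}\Var_{x,o,G}(|\o|) \geq 0$ via the quotient rule, which is the standard exponential-family identity; the justification of term-by-term differentiation inside the radius of convergence and the endpoint remark are handled adequately.

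The paper takes a different route: it shows that for each fixed $m$ the tail function $P_m(x) = \Pr_{x,o,G}[|\o| < m]$ is non-increasing in $x$, by reducing $P_m'(x) \leq 0$ to the trivially true inequality $\E_{x}[\,|\o|\mid |\o|<m\,] \leq \E_{x}[\,|\o|\mid |\o|\geq m\,]$. This yields stochastic monotonicity of $|\o|$ in $x$, from which monotonicity of the mean follows. So the paper's argument is slightly longer but proves a stronger intermediate statement (stochastic domination of the whole distribution, not just the expectation), while your variance computation is the shortest path to the theorem as stated.
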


%
%
%

\section{Open questions}

Before moving to the proofs, let us mention some basic open questions.

\subsection{Critical exponents}

There are other critical exponents which appear in the classical self-avoiding walk literature
(\eg mean-square displacement exponent $\nu$).  It would be of interest to define these values
in the finite graph setting, and perhaps obtain some relations similar to the Fischer relations (see \eg \cite{hugo}).

Regarding the exponent $\gamma$, note that {\em a-priori} its value depends on the choice of the 
sequence of parameters $x_n$.

\begin{ques}
Show that for two critical sequences $\frac{x_n}{y_n} \to 1$, we have that
$$ \frac{\gamma(x_n,o_n,G_n) }{ \gamma(y_n,o_n,G_n) } \to 1 . $$
\end{ques}

Another phenomena arising from this work is that we expect $\gamma = \infty$ in the super-critical regime.
This regime has no analogue in the infinite-graph case.

\begin{ques}
Show that if $(x_n)_n$ is a super-critical sequence then $\gamma(x_n,o_n,G_n) \to \infty$.
\end{ques}

\subsection{$I$ is decreasing}

The expected length of a self-avoiding walk $L(x,o,G)$ is an increasing function of $x$.
Intuitively, raising $x$ increases the length of a typical path.  
We would expect that this phenomena should imply that two independent self-avoiding walks 
should have smaller chances of a trivial intersection as $x$ grows.

\begin{ques}
Show that the function $x \mapsto I(x,o,G)$ is a non-increasing function of $x$.
\end{ques}

\subsection{Extending to other models}

In this paper we have dealt with transitive large girth graphs.
Transitivity is required to relate the expected length $L$ to the intersection probability $I$, 
as in Proposition \ref{prop: intersection}, which is a starting point for our analysis.
The large girth is required for the connection to the non-backtracking walk, see Section \ref{scn: connection to NB} below.

It is very natural to extend the results in this paper and investigate graphs that are not necessarily transitive, 
and graphs that may have small cycles, but only very few.  For example, this is what random $d$-regular graphs look like.
It seems that the analysis becomes more difficult in these cases, and perhaps new ideas are required.

\section{Connection to non-backtracking random walk}
\label{scn: connection to NB}

The connection between self-avoiding walks and non-backtracking walks has 
already appeared in a paper by Kesten \cite{Kesten}.
It is used to give bounds on the connective constant of $\Z^d$ for large $d$.

It seems, in fact, that there is a deeper connection between self-avoiding walks and non-backtracking random walks.
The theory is developed in \cite{Fitzner}, where Fitzner and van der Hofstad develop lace expansion based on the 
non-backtracking random walk rather than the simple random walk.  They apply this in $\Z^d$ to the analysis of 
$\SAW$.  However, it seems that lace expansion methods, while achieving amazing results in $\Z^d$, are difficult to
generalize to other settings (such as non-commutative groups and even more general graphs).

We now give a simple application of  
non-backtracking random walks to the study of self avoiding walks.

A non-backtracking path $\o$ in $G$ is a sequence $\o = (\o_0,\ldots,\o_n)$ such that for all 
$j<n$ we have $\o_j \sim \o_{j+1}$ (so that $\o$ is a path in $G$), and this path never backtracks an edge in $G$:
\ie for all $j<n-1$ we have that $\o_{j+2} \neq \o_j$.
We use $\NB$ to denote a non-backtracking random walk on a fixed 
$d$-regular graph $G$ started at a fixed vertex $o \in G$. 
$\bP_o,\bE_o$ denote the corresponding probability measure and expectation.
Thus, for any non-backtracking path $\o$ in $G$ of length $|\o|=k$ started at $o$,
the $\bP_o$-probability that $\NB[0,k] = \o$ is $\tfrac1{d} \cdot (d-1)^{-(k-1)}$. 
It is simple to see that this is obtained by the Markov chain on directed edges, 
that at each step chooses uniformly among all
edges emanating from the current vertex, except the edge that was used to reach the current vertex in the 
previous time step.

The key observation is as follows.
$$ |\SAW_k(o,G)| \cdot (d(d-1)^{k-1} )^{-1} = \sum_{\o \in \SAW_k} \tfrac{1}{d (d-1)^{k-1} } 
= \bP_o [ \NB[0,k] \in \SAW(o,G) ] . $$
Define 
$$ T = \inf \set{ k \geq 0 \ : \ \NB[0,k] \not\in \SAW(G) } = \inf \set{ k > 2 \ : \ \exists \ j < k \ , \ \NB(j) = \NB(k) } . $$
We call $T$ the self-intersection time of $\NB$. 
Thus,
\begin{align}
\label{eqn: key}
|\SAW_k(o,G)| & = \frac{d}{d-1} \cdot (d-1)^k \cdot \bP_o [ T > k ]
\end{align}

Now consider the partition function for any $0<y \neq 1$ and $x = \tfrac{y}{d-1}$,
\begin{align} \label{eqn: partition function}
Z_{o,G}(x) & = \sum_{k=0}^{\infty} |\SAW_k(o,G)| (d-1)^{-k} y^k \nonumber  \\
& = 
\tfrac{d}{d-1} \cdot \sum_{k=0}^{\infty} \bP_o [ T > k ] y^k  = \frac{d}{d-1}  \cdot \frac{1}{y-1} \cdot \bE_o [ y^T - 1 ] .
\end{align}
Since,
\begin{align*}
\sum_{y=0}^{m-1} k y^k & = y \frac{\p}{\p y} \sum_{k=0}^{m-1} y^k = y \frac{\p}{\p y} \frac{y^m-1}{y-1} \\
& = y \frac{my^{m-1} (y-1) - y^m+1 }{(y-1)^2} 
= \frac{y}{y-1} \cdot m y^{m-1} - \frac{y}{y-1} \cdot \frac{y^m-1}{y-1} ,
\end{align*}
we get that
\begin{align} \label{eqn: length}
\E_{x,o,G} [|\o|] & = (Z_{o,G}(x))^{-1} \cdot \tfrac{d}{d-1} \cdot \bE_o [ \sum_{k=0}^{\infty} k y^k \1{T>k} ] 
\nonumber \\
& =  \frac{\bE_o [T y^{T} ] }{\bE_o[y^T-1] } - \frac{y}{y-1} = \frac{y}{1-y} - \frac{ \bE_o[Ty^T]}{ \bE_o [ 1-y^T] } . 
\end{align}

For the case $x = \tfrac{1}{d-1}$ (\ie $y=1$),
\begin{align} \label{eqn: partition function at 1}
Z_{o,G}(x) & = \sum_{k=0}^{n-1} |\SAW_k(o,G)| (d-1)^{-k} \nonumber   \\
& = 
\tfrac{d}{d-1} \cdot \sum_{k=0}^{n-1} \bP_o [ T > k ]   = \frac{d}{d-1} \cdot \bE_o [ T ] .
\end{align}
And similarly to the above,
\begin{align} \label{eqn: length at 1}
\E_{x,o,G} [|\o|] & = (Z_{o,G}(x))^{-1} \cdot \tfrac{d}{d-1} \cdot \bE_o [ \sum_{k=0}^{\infty} k \1{T>k} ] 
 =  \frac{\bE_o [T(T-1) ] }{2 \bE_o[T] } .
\end{align}

\section{Proofs of General Results}

We start with the proof of Proposition \ref{prop:unique}, which is the statement the critical 
sequences are unique (in a certain sense).

\begin{proof}[Proof of Proposition \ref{prop:unique}] 
To simplify the notation in this proof, we write $Z(x_n)$ for $Z_{o_n,G_n}(x_n)$.

For the first direction, assume that both $(x_n)_n, (y_n)_n$ are critical sequences.
It suffices to prove that $\limsup_{n \to \infty} \tfrac{x_n}{y_n} \leq 1$, since both sequences play the same role.

If $\limsup_{n \to \infty} \tfrac{x_n}{y_n} > 1$, then there exists some $0<\eps < \tfrac14$ and subsequences
$(x_{n_k})_k , (y_{n_k})_k$ such that for all $k$,
$$ \frac{x_{n_k}}{y_{n_k}} \geq 1+4 \eps \geq 1 + \tfrac{2\eps}{1-\eps} = \frac{1+\eps}{1-\eps} . $$
Thus, by the definition of a critical sequence,
using the fact that $Z_{o,G}(\cdot)$ is an increasing function,
\begin{align*}
\infty & > \limsup_{n \to \infty} Z(x_n(1-\eps) ) \geq \limsup_{k \to \infty} Z(x_{n_k}(1-\eps) ) \\
& \geq \limsup_{k \to \infty} Z(y_{n_k}(1+\eps)) \geq 
\liminf_{n \to \infty} Z(y_n(1+\eps)) = \infty ,  
\end{align*}
a contradiction!

For the other direction, let $(x_n)_n$ be a critical sequence and $(y_n)_n$ a sequence such that $\frac{x_n}{y_n} \to 1$.

Fix small $\eps>0$.  Then, for all large enough $n$ we have $x_n(1-\tfrac12 \eps) \leq y_n \leq (1+\eps) x_n$.
Thus, 
\begin{align*}
\limsup_{n \to \infty} Z(y_n(1-\eps)) & \leq \limsup_{n \to \infty} Z(x_n(1-\eps^2) ) < \infty , \\
\liminf_{n \to \infty} Z(y_n(1+\eps) ) & \geq \liminf_{n \to \infty} Z(x_n(1 + \tfrac12 \eps(1-\eps) ) ) = \infty .
\end{align*}
So $(y_n)_n$ is critical as well.
\end{proof}

We move to the proof of Proposition \ref{prop: intersection}, which is the identity $I Z = L+1$.

\begin{proof}[Proof of Proposition \ref{prop: intersection}]
Let 
$$ S_{k,j} : = \set{ (\o,\o') \in \SAW_k(o,G) \times \SAW_j(o,G) \ : \ \o \cap \o' = \set{ o} } . $$
For $(\o,\o') \in S_{k,j}$ let $\hat{\o}$ be the path that is $\vphi(\o \cup \o')$ mapped by the automorphism $\vphi$
of $G$ taking $\o(k)$ to $o$.  Note that $\hat{\o} \in \SAW_{k+j}(o,G)$ because $\o \cap \o' = \set{o}$.
This map may be inverted.  If $\hat{\o} \in \SAW_{k+j}(o,G)$ one can define a pair 
$$ \o = \vphi^{-1} (\hat\o (k) , \hat\o(k-1) , \ldots, \hat{\o} (0) ) \AND \o' = \vphi^{-1} (\hat{\o}[k,k+j]) , $$  
where $\vphi$ is the same automorphism as before.  Then $(\o,\o') \in S_{k,j}$.
Thus, we have shown that when $G$ is transitive, $|S_{k,j}| = |\SAW_{k+j}|$.

We now compute for $x = \tfrac{y}{d-1}, y \neq 1$,
\begin{align*}
(Z_{o,G}(x))^{2} & \cdot \Pr [ \o \cap \o' = \set{o} ] = \tfrac{d}{d-1} \cdot \sum_{k,j=0}^{n-1} y^{k+j} \bP_o[ T>k+j] = 
\tfrac{d}{d-1} \cdot \bE_o \sum_{k=0}^{T-1} y^k \sum_{j=0}^{T-1-k} y^j \\
& = \tfrac{d}{d-1} \cdot \frac{1}{y-1} \cdot \bE_o \sum_{k=0}^{T-1} y^k(y^{T-k}-1) = \tfrac{d}{d-1} \cdot \frac{1}{(y-1)^2} \cdot
\bE_o [ T y^T (y-1) - y^T + 1 ] . 
\end{align*}
Thus, 
\begin{align*}
\Pr [ \o \cap \o' = \set{o}] & = \frac{d-1}{d} \cdot \frac{\bE_o [ T y^T (y-1) - y^T + 1 ]}{(\bE_o [y^T - 1])^2 } .
\end{align*}
So by \eqref{eqn: length},
\begin{align*}
\E_{x,o,G} [ |\o| ] +1& = \frac{\bE_o [T y^{T} ] }{\bE_o[y^T-1] } - \frac{1}{y-1} = 
\frac{1}{y-1} \cdot \frac{\bE_o [T y^{T} (y-1) - y^T + 1 ] }{\bE_o[y^T-1] } \\
& = \frac{1}{y-1} \cdot \frac{d}{d-1} \cdot \Pr [ \o \cap \o' = \set{o} ] \cdot \bE_o [ y^T-1 ] 
\\
& = \Pr [ \o \cap \o' = \set{o} ] \cdot Z_{o,G}(x) . 
\end{align*}
In the case $x = \tfrac{1}{d-1}$ ($y=1$),
\begin{align*}
(Z_{o,G}(x))^{2} \cdot \Pr [ \o \cap \o' = \set{o} ] & =  \tfrac{d}{d-1} \cdot \sum_{k,j=0}^{n-1} \bP_o[ T>k+j] 
= \tfrac{d}{d-1} \cdot \bE_o \sum_{k=0}^{T-1} (T-k) \\
& = \tfrac{d}{d-1} \cdot \bE_o \sum_{k=1}^{T} k = \tfrac{d}{d-1} \cdot \tfrac12  \bE_o [ T(T+1) ] .
\end{align*}
So,
\begin{align*}
\Pr [ \o \cap \o' = \set{o} ] & = \frac{d-1}{d} \cdot \frac{ \bE_o [ T(T+1) ] }{ 2 (\bE_o [ T ] )^2 } , 
\end{align*}
and by \eqref{eqn: length at 1},
\begin{align*}
\E_{x,o,G} [|\o|] +1& = \frac{ \bE_o [ T(T+1)] }{ 2 \bE_o [ T ] } = \frac{d}{d-1}  \cdot \Pr [ \o \cap \o' = \emptyset ]
\cdot \bE_o [ T ] \\
& = \Pr [ \o \cap \o' = \set{o} ] \cdot Z_{o,G}(x) . 
\end{align*}
\end{proof}

We now prove Theorem \ref{thm: L increasing}, showing that $L(x)$ is an increasing function of $x$,
regardless of the graph chosen (as long as it is defined).

\begin{proof}[Proof of Theorem \ref{thm: L increasing}]
Define $P_m(x) = \Pr_{x,o,G} [ |\o| < m ]$.
We claim that $P_m$ is decreasing in $x$.
This implies that for $x<z$, the random variable
$|\o|$ under $\Pr_{x,o,G}$ is stochastically dominated by $|\o|$ under $\Pr_{z,o,G}$.
So $L(x) = \E_{x,o,G}[|\o|] \leq \E_{z,o,G}[|\o|] = L(z)$ for all $x<z$.

We move to prove that $P_m$ is a decreasing function.
Write $y=(d-1)x$ and $Q_m(x) = \tfrac{d}{d-1} \bE_o \sum_{k=0}^{(T\mn m)-1} y^k$.
Using \eqref{eqn: key}, we have that $P_m(x) = Q_m(x) / Z(x)$.
So $P_m'(x) = Z(x)^{-2} \cdot \sr{ Q_m'(x)  Z(x) - Q_m(x) Z'(x) }$.

Note that $Z = Q_{\infty}$.
We have that for any $m \in \N \cup \set{ \infty}$,
\begin{align*}
y Q_m'(x) & = \tfrac{d}{d-1} \cdot \bE_o \sum_{k=0}^{(T \mn m)-1} (d-1) k y^{k} 
= (d-1) \E_{x,o,G} [ |\o| \1{|\o| < m } ] \cdot Z(x) . 
\end{align*}
Thus, $P'_m(x) \leq 0$ if and only if
$$ \E_{x,o,G} [ |\o| \1{|\o| < m }] \leq \E_{x,o,G} [|\o|] \cdot P_m(x)
= \E_{x,o,G} [|\o| \1{|\o|<m } ] \cdot P_m(x) + \E_{x,o,G} [|\o| \1{ |\o| \geq m} ] \cdot P_m(x) , $$
which is if and only if
$$ \E_{x,o,G} [ |\o| \1{ |\o| < m } ] \cdot (1-P_m(x)) \leq \E_{x,o,G} [ |\o| \1{ |\o| \geq m }] \cdot P_m(x) . $$
Recall that $P_m(x) = \Pr_{x,o,G} [ |\o| < m ]$. If $P_m(x) \in \set{0,1}$ then the inequality above holds trivially.
If $0 < P_m(x) < 1$ then the inequality is equivalent to
$$ \E_{x,o,G} [ |\o| \ | \ |\o| < m ] \leq \E_{x,o,G} [ |\o| \ | \ |\o| \geq m ] , $$
which is obviously true always.
\end{proof}

Finally we prove Theorem \ref{thm: critical and I}, which states that sequences are super-critical
(resp.\ sub-critical) if and only if $I \to 0$ (resp.\ $I >0$).

\begin{proof}[Proof of Theorem \ref{thm: critical and I}]
Jensen's inequality tells us that for any $x>0$, $y=(d-1)x$ and $\alpha \neq - \log y$,
\begin{align*}
e^{ \alpha L(x,G) } & \leq \E_{x,G} [ e^{\alpha |\o| } ] = 
Z_G(x)^{-1} \cdot \tfrac{d}{d-1} \cdot \bE_o \sum_{k=0}^{T-1} e^{\alpha k} y^k 
= \frac{ Z_G(x e^{\alpha} ) }{ Z_G(x) } . 
\end{align*}

Now, if $(x_n)_n$ is a sub-sequence such that $Z_{G_n}(x_n) \to \infty$,
then choosing $\eps>0$ and $z_n = x_n(1-\eps)$, we have with $\alpha = \log(1-\eps)$,
\begin{align*}
I(x_n,G_n) & = \frac{ L(x_n,G_n) + 1 }{ Z_{G_n}(x_n) } 
\leq \frac{ -\log(1-\eps) + \log Z_{G_n}(x_n) - \log Z_{G_n} (z_n) }{ -\log(1-\eps) \cdot Z_{G_n}(x_n) } \to 0 . 
\end{align*}
Thus, $Z_{G_n}(x_n) \to \infty$ implies that $I(x_n,G_n) \to 0$.

Also, since
$$ I(x_n,G_n) = \frac{ L(x_n,G_n) +1}{Z_{G_n}(x_n) } \geq \frac{1}{Z_{G_n}(x_n) } , $$
we have that $I(x_n,G_n) \to 0$ implies that $Z_{G_n}(x_n) \to \infty$.

Thus, 
\begin{align*}
\liminf_{n \to \infty} Z_{G_n}(x_n) = \infty & \iff \limsup_{n \to\infty} I(x_n,G_n) = 0 , \\
\limsup_{n \to \infty} Z_{G_n}(x_n) < \infty & \iff \liminf_{n \to \infty} I(x_n,G_n) > 0 . 
\end{align*}
\end{proof}

\section{Mean-field $\SAW$}
\label{scn: mean field}

In this section we give a short account of the mean-field $ \SAW$; that is when $G_n = K_n$ is the complete 
graph on $n$ vertices.  

The main (simple) observation is that on the complete graph 
one can count exactly the number of self-avoiding walks of a specific length.
Specifically, for a fixed vertex $o_n \in K_n$,
$$ 
\SAW_k(o_n,K_n) =
\begin{cases}
\frac{(n-1)!}{(n-1-k)!} & 0 \leq k \leq n-1 \\
0 & k > n-1 .
\end{cases}
$$

Thus, for all $0 \leq k \leq n-1$,
\begin{align*}
\Pr_{x,G_n} [ |\o| = n-1- k] & = \frac{ x^{n-1-k} |\SAW_{n-1-k}(o_n,K_n) | }{ \sum_{j=0}^{n-1} x^j |\SAW_j(o_n,K_n) | }
= \frac{ x^{n-1-k} |\SAW_{n-1-k}(o_n,K_n) | }{ \sum_{j=0}^{n-1} x^{n-1-j} |\SAW_{n-1-j}(o_n,K_n) | }
\\
& = \frac{ \frac{x^{-k}}{k!} }{ \sum_{j=0}^{n-1} \frac{x^{-j} }{j! } }  = \Pr [ P = k \ | \ P \leq n-1 ] ,
\end{align*}
where $P \sim \mathrm{Poi}(x^{-1} )$.
That is, under $\Pr_{x,K_n}$ the quantity $n-1-|\o|$ has a conditional Poisson distribution.
Specifically,
\begin{align}
\label{eqn: Poisson MF}
L(x,K_n) & = n-1 - \E [P \ | \ P \leq n-1 ] .
\end{align}
This leaves the task of understanding the conditional expectation above for the different regimes of $x$.

Since this is the main observation required to prove Theorems \ref{thm: mean field} and \ref{thm: gamma MF}, 
we have placed the proofs of these theorems in the appendix.

\section{$\SAW$ on large girth graphs}

We now investigate the consequences of the connections between $\SAW$ and non-backtracking random walk
made in Section \ref{scn: connection to NB}, for sequences of graphs with large girth.

For this section let $(G_n)_n$ be a sequence of $d$-regular graphs of size $|G_n| = n$ and 
girth of $G_n$ being $g_n$.  Assume that $g_n \to \infty$ as $n \to \infty$.
We fix some vertex $o_n \in G_n$, and denote 
$L_n(x) = L(x,o_n,G_n) , I_n(x) = I(x,o_n,G_n) , Z_n(x) = Z_{o_n,G_n}(x)$.
Sometimes we will omit the subscript $n$ when it is clear from the context.

\subsection{Sub-critical phase}

\begin{lem}[Sub-critical phase]
\label{lem: sub}
Let $(x_n)_n$ be a positive sequence such that $\lim_{n \to \infty} x_n = x < \tfrac1{d-1}$.
Then, 
$$ \lim_{n \to \infty} L_n(x_n) = \frac{(d-1)x}{1-(d-1)x} . $$
\end{lem}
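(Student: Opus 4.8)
The plan is to read off the expected length from the non‑backtracking representation of Section~\ref{scn: connection to NB} and show the correction term is negligible once the girth is large. Write $y_n=(d-1)x_n$, and let $T=T(G_n)$ be the self‑intersection time of the non‑backtracking walk on $G_n$ started at $o_n$. Since $x_n\to x<\tfrac1{d-1}$, for all large $n$ we have $y_n\ne 1$ and in fact $y_n\le y'$ for some fixed $y'<1$, so \eqref{eqn: length} applies and gives
$$L_n(x_n)=\frac{y_n}{1-y_n}-\frac{\bE_{o_n}[Ty_n^{\,T}]}{\bE_{o_n}[1-y_n^{\,T}]}.$$
Because $\tfrac{y_n}{1-y_n}\to\tfrac{(d-1)x}{1-(d-1)x}$, it suffices to prove that the second term tends to $0$.

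The key input is the deterministic bound $T\ge g_n$ on $G_n$. To see this, suppose the non‑backtracking walk first self‑intersects at time $T$, so $\NB(0),\dots,\NB(T-1)$ are distinct and $\NB(T)=\NB(j)$ for some $j$; non‑backtracking forces $\NB(T)\ne\NB(T-1)$ and $\NB(T)\ne\NB(T-2)$, so $j\le T-3$, and then $\NB(j),\NB(j+1),\dots,\NB(T-1),\NB(T)=\NB(j)$ is a simple cycle of length $T-j\ge 3$ in $G_n$. Hence $g_n\le T-j\le T$. (If the walk never self‑intersects then $T=\infty$, which is even better.)

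Given $T\ge g_n$ and $y_n\le y'<1$, we get $\bE_{o_n}[y_n^{\,T}]\le y_n^{\,g_n}\le (y')^{g_n}\to 0$, so $\bE_{o_n}[1-y_n^{\,T}]\to 1$; and, for $n$ large enough that $t\mapsto t(y')^{t}$ is decreasing on $[g_n,\infty)$, $\bE_{o_n}[Ty_n^{\,T}]\le\sup_{t\ge g_n}t(y')^{t}=g_n(y')^{g_n}\to 0$. Therefore the correction term in the displayed formula vanishes and $L_n(x_n)\to\tfrac{(d-1)x}{1-(d-1)x}$, as claimed. (Equivalently, one could avoid \eqref{eqn: length} altogether and argue directly from \eqref{eqn: key}: $|\SAW_k(o_n,G_n)|=d(d-1)^{k-1}$ for $1\le k<g_n$ and $|\SAW_k(o_n,G_n)|\le d(d-1)^{k-1}$ always, so both $Z_n(x_n)$ and $\sum_k k|\SAW_k(o_n,G_n)|x_n^k$ converge to the corresponding geometric‑series values, with tails past $g_n$ bounded by $\tfrac{d}{d-1}\sum_{k\ge g_n}k(y')^{k}\to0$.) There is no real obstacle here; the only point needing care is the cycle‑extraction argument giving $T\ge g_n$, together with the harmless remark that $y_n\ne1$ eventually so \eqref{eqn: length} is legitimately applicable.
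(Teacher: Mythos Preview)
Your proof is correct and follows essentially the same approach as the paper: both apply \eqref{eqn: length}, use the deterministic bound $T\ge g_n$ together with the eventual monotonicity of $t\mapsto ty^t$ for $y<1$ to show the correction term $\bE_{o_n}[Ty_n^{\,T}]/\bE_{o_n}[1-y_n^{\,T}]$ vanishes, and conclude. Your write-up adds the explicit cycle-extraction justification for $T\ge g_n$ and an alternative direct argument via \eqref{eqn: key}, but the core idea is identical.
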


\begin{proof}
We will prove that in fact that if $x_n < \tfrac{1}{d-1}$, then for $y_n = (d-1)x_n$,
\begin{align}
\label{eqn: sub critical length}
\frac{y_n}{1-y_n} - \frac{g_n y_n^{g_n} }{1- y_n^{g_n} } & \leq L_n(x_n) \leq \frac{y_n}{1-y_n} . 
\end{align}
Indeed, using \eqref{eqn: length}, we only need to bound $$\frac{\E_{o_n}[Ty^T] }{ \E_{o_n}[1-y^T] }$$ 
from above for $y<1$,
where $T$ is the self intersection time of a non-backtracking random walk on $G_n$.
Note that $T \geq g_n$ a.s.  Also, $k \mapsto k y^k$ is decreasing for $k>(-\log y)^{-1}$.
So $\E_o[Ty^T] \leq g_n y^{g_n}$ when $g_n$ is large enough, and $\E_o [ 1-y^T ] \geq 1-y^{g_n}$.

The lemma now follows because $y_n < 1$ for large enough $n$ and $g_n \to \infty$.
\end{proof}

\subsection{Super-critical phase}

\begin{lem}
\label{lem: lower bound}
Suppose that $G$ is a transitive graph with girth $g$ and size $|G|=n$.
Let 
$$ \tau = \min \set{ t \ : \ \max_{u,v} | \bP_u [\NB(t) = v ] - \tfrac1n | \leq \tfrac{1}{2n} } $$
be the mixing time of the non-backtracking random walk $\NB$ on $G$,
and let $T$ be the self intersection time of $\NB$.
For all $m \geq \tau$,
$$ \bP_o [ T>k+m ] \geq \sr{ 1 - \tfrac{3(k+1) m}{2n} - m^2 \cdot (d-1)^{- \lfloor g_n / 2 \rfloor } } \cdot \bP_o [ T>k] . $$
\end{lem}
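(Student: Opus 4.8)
The statement compares $\bP_o[T > k+m]$ with $\bP_o[T>k]$, so the natural approach is to condition on the event $\{T > k\}$ and the path $\NB[0,k]$, and then estimate the probability that the walk survives (does not self-intersect) for the additional $m$ steps. Write
$$ \bP_o[T > k+m] = \bP_o[T>k] \cdot \bP_o[\,\text{no self-intersection in steps } k{+}1,\dots,k{+}m \mid T>k, \NB[0,k]=\omega\,] $$
averaged over surviving $\omega$, so it suffices to lower-bound, for every self-avoiding $\omega$ of length $k$, the conditional probability that the continuation $\NB[k,k+m]$ (which, by the Markov property on directed edges, is just a fresh non-backtracking walk of length $m$ started from the directed edge at the tip of $\omega$) neither hits $\omega$ nor hits itself. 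By a union bound this conditional probability is at least $1$ minus two error terms: (i) the probability that the $m$-step continuation revisits one of the $k+1$ vertices of $\omega$, and (ii) the probability that the $m$-step continuation self-intersects.

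\textbf{Bounding term (i) via mixing.} Here is where $\tau$ and the hypothesis $m \ge \tau$ enter. For a single step $t$ with $\tau \le t \le m$ (note $t \le m$ but we need $t \ge \tau$; since $m \ge \tau$ we can split the $m$ steps into the first $\tau$ and the rest, or simply observe that even for $t < \tau$ we can push the walk forward): for $t \geq \tau$ the law of $\NB(k+t)$ is within $\tfrac{1}{2n}$ of uniform, so $\bP_o[\NB(k+t) \in \omega \mid \cdots] \le \tfrac{k+1}{n} + \tfrac{k+1}{2n} \le \tfrac{3(k+1)}{2n}$. Summing over the $\le m$ relevant values of $t$ gives the $\tfrac{3(k+1)m}{2n}$ term. (The first $\tau \le m$ steps that aren't yet mixed need a slightly more careful treatment — one standard trick is to note that after $\tau$ steps the chain is mixed and then run it again, i.e. absorb those into the count using $m \ge \tau$; I would make this precise by noting that $\bP[\NB(k+t)=v] \le \tfrac{3}{2n}$ for all $t \ge \tau$ and handling $t$ between $1$ and $\tau$ by the trivial observation that these are also controlled once we only ever need the sum over $m$ steps total and $m \ge \tau$ lets us shift.) I expect this bookkeeping around the non-mixed initial segment to be the main technical nuisance, though not a deep obstacle.

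\textbf{Bounding term (ii) via girth.} For the continuation to self-intersect, there must be $k < i < j \le k+m$ with $\NB(i) = \NB(j)$; since $\NB$ is non-backtracking and $G$ has girth $g$, any such coincidence forces $j - i \ge g$, and moreover a non-backtracking walk of length $< g$ started anywhere is automatically self-avoiding, so in fact such a coincidence forces the segment $[i,j]$ to traverse a cycle, giving $j-i \ge g$. The probability that two fixed times $i,j$ in a window of length $m$ collide is at most $(d-1)^{-\lfloor g/2\rfloor}$: indeed, conditioning on $\NB(i)$ and on the forward path, reaching $\NB(i)$ again requires the walk to return, and a non-backtracking walk takes at least $\lfloor g/2 \rfloor$ free choices to close up (this is essentially the key observation $|\SAW_k| = \tfrac{d}{d-1}(d-1)^k \bP_o[T>k]$ run in reverse, or a direct counting argument). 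Union-bounding over the $\le m^2$ pairs $(i,j)$ gives the $m^2 (d-1)^{-\lfloor g/2\rfloor}$ term.

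\textbf{Assembly.} Combining (i) and (ii) through the union bound yields, uniformly over surviving $\omega$,
$$ \bP_o[\NB[k,k+m] \text{ survives} \mid T>k, \NB[0,k]=\omega] \ge 1 - \tfrac{3(k+1)m}{2n} - m^2 (d-1)^{-\lfloor g/2 \rfloor}, $$
and averaging over $\omega$ gives exactly the claimed inequality. I would present steps (i) and (ii) as two short sub-lemmas or displayed estimates, then conclude in two lines. The only place demanding real care is making the mixing-time argument airtight for the first $\tau$ steps of the continuation, which I'd resolve by invoking $m \ge \tau$ to guarantee enough mixed steps and absorbing the constant into the $\tfrac{3}{2}$ factor.
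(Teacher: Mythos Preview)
Your overall decomposition matches the paper's: condition on $\NB[0,k]=\omega$, then bound separately (i) the probability that the $m$-step continuation hits $\omega$ and (ii) the probability that the continuation self-intersects, and combine by a union bound. Your treatment of term (ii) is exactly the paper's: union-bound over the at most $m^2$ pairs $(t,t')$ and use the girth to get the per-pair bound $(d-1)^{-\lfloor g/2\rfloor}$.

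The gap is in term (i), precisely at the point you flag as a ``technical nuisance''. Your plan is to bound $\bP[\NB(k+t)\in\omega\mid\NB[0,k]=\omega]$ by $\tfrac{3(k+1)}{2n}$ for each $t$ and sum. This works for $t\ge\tau$, but for $1\le t<\tau$ there is simply no bound of the form $C(k+1)/n$ available: the law of $\NB(k+t)$ is concentrated near the tip of $\omega$ and nothing prevents it from putting mass of order $1$ on the \emph{early} portion of $\omega$ (girth only protects the last $g$ vertices of $\omega$). Your proposed fixes---``absorb into the $3/2$'', ``shift using $m\ge\tau$''---do not actually produce an inequality; there are $\tau$ problematic steps and each could contribute something of order $1$, not $O(1/n)$. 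This is not bookkeeping; it is the place where a real idea is needed.

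The paper's idea is a \emph{time-reversal} argument that sidesteps the unmixed initial segment entirely. Condition further on the endpoint $\NB(k+m)=v$. Reversing the continuation $\NB[k,k+m]$ gives a non-backtracking path of length $m$ started at $v$, and the event ``the continuation hits $\omega$ somewhere in $[k+1,k+m]$'' becomes ``the reversed path hits $\omega$ somewhere in $[0,m-1]$''. Now use $m\ge\tau$ to bound $\bP[\NB(k+m)=v\mid\NB[0,k]]\le\tfrac{3}{2n}$, and sum over $v$: this replaces the continuation by a non-backtracking walk started from (essentially) the \emph{uniform} distribution. But the uniform distribution is \emph{stationary} for $\NB$, so $\bP_{\text{unif}}[\NB(t)\in\omega]=\tfrac{k+1}{n}$ for \emph{every} $t$, including $t<\tau$. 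Summing over $t=0,\ldots,m-1$ yields the claimed $\tfrac{3(k+1)m}{2n}$ with no leftover terms. The point is that mixing is used only once, at the single time $m$, rather than at every intermediate time; stationarity then handles all times uniformly.
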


\begin{proof}
We will make use of the fact that uniformly over $v$,
$$ \bP_v [ \NB(t) = v ] \leq
\begin{cases}
0 & \textrm{ if } t<g_n \\
(d-1)^{- \lfloor g_n/2 \rfloor } & \textrm{ if } g_n \leq t < \tau \\
\tfrac{3}{2n}  & \textrm{ if } t \geq \tau .
\end{cases}
$$
The middle case coming from the fact that $\set{ u \ : \ \dist(v,u) \leq \tfrac{g_n}{2} }$ is a tree,
so to hit $v$ at time $t$ with a non-backtracking walk, 
one must be at distance $\lfloor \tfrac{g_n}{2} \rfloor$ at time $t- \lfloor \tfrac{g_n}{2} \rfloor$
and then move towards $v$ in the next $\lfloor \tfrac{g_n}{2} \rfloor$ steps.

Fix $m \geq \tau$.
$\NB[0,k+m]$ is uniformly distributed on non-backtracking paths of length $k+m$ starting at $\NB(0)$.
Thus, conditioned on $\NB[0,k] , \NB(k+m) = v$, 
the walk $\NB[k,k+m]$ is uniformly distributed on non-backtracking paths of length $m$ starting at $\NB(k)$, 
such that the first step is not $\NB(k-1)$ and such that the last step is at $v$.
Fix $v$ and conditioned on $\NB[0,k]=A$. Set
$$
\O = \O(v,A) = \left\{  \o \textrm{ non-backtracking } \ : \
\begin{array}{l}
|\o|=m \ , \  \o(0)=\NB(k) \  , \  \o(1) \neq \NB(k-1) , \\ 
\o[1,m] \cap A \neq \emptyset \ , \ \o(m) = v
\end{array}
\right\}
$$
$$
\O' = \O'(v,A) = \left\{  \o \textrm{ non-backtracking } \ : \
\begin{array}{l}
|\o|=m \ , \  \o(0)=v  , \\  
\o[0,m-1] \cap A \neq \emptyset  
\end{array}
\right\}
$$
Since for every $\o \in \O$ we have that the reversal of $\o$ is in $\O'$ we get that $|\O| \leq |\O'|$.
Thus, using the fact that $m \geq \tau$,
\begin{align*}
\bP_o \big[ \NB[k+1,k+m] & \cap A \neq \emptyset 
 \ , \  \NB[0,k] = A , \NB(k+m) = v \big] \\ 
& = |\O| \cdot (d-1)^{-m} \cdot \bP_o [ \NB[0,k] = A , \NB(k+m) = v ] \\
& \leq |\O'| \cdot (d-1)^{-m} \cdot \bP_o [ \NB[0,k] = A , \NB(k+m) = v ] \\
& = \bP_v [ \NB[0,m-1] \cap A \neq \emptyset ] \cdot \bP_o[ \NB[0,k] = A , \NB(k+m) = v ] \\
& \leq \bP_v [ \NB[0,m-1] \cap A \neq \emptyset ] \cdot \bP_o [ \NB[0,k] = A  ] \cdot \frac{3}{2n} .
\end{align*}
Summing over $v$ we obtain
\begin{align*}
\bP_o \big[ \NB[k+1,k+m] \cap A \neq \emptyset & \  \big| \  \NB[0,k]=A \big] 
\leq \tfrac32 \cdot \bP_o [ \NB[0,m-1] \cap A \neq \emptyset ] .
\end{align*}
Since the uniform distribution is stationary for the non-backtracking random walk on a regular graph,
\begin{align*}
\tfrac{1}{n} \sum_{o \in G}  \bP_o [ \NB[0,m-1] \cap A \neq \emptyset ] & \leq \tfrac{1}{n} \sum_{o \in G} 
\bE_o\big[ | \NB[0,m-1] \cap A | \big]  
=  \tfrac{1}{n} \sum_{t=0}^{m-1}  \sum_{o \in G} \bP_o [ \NB(t) \in A ] = \frac{|A| m}{n} . 
\end{align*}
Thus,
$$ \tfrac{1}{n} \sum_{o \in G} 
\bP_o \big[ \NB[k+1,k+m] \cap \NB[0,k] \neq \emptyset  \  \big| \  \NB[0,k]  \big] \leq \frac{3 (k+1) m}{2n} . $$

Also, if $\NB[k,k+m] \not\in \SAW$ then there exist $0 \leq t < t' \leq m$ such that 
$\NB(k+t) = \NB(k+t')$.  We have seen above that for any 
such pair $t<t'$, this probability is bounded by
$\bP_o [ \NB(k+t) = \NB(k+t') \ | \ \NB[0,k]  ] \leq (d-1)^{- \lfloor g_n / 2 \rfloor}$.  Thus,
\begin{align*}
\bP_o \big[ \NB[k,k+m] \not\in \SAW \ \big| \ \NB[0,k] \big] 
& \leq \sum_{0 \leq t< t' \leq m} \Pr \big[ \NB(k+t) = \NB(k+t') \ \big| \ \NB[0,k] \big] \\
& \leq m^2 \cdot (d-1)^{-\lfloor g_n / 2 \rfloor} .
\end{align*}

If $\NB[k,k+m] \in \SAW , \NB[0,k] \in \SAW$ and $\NB[k+1,k+m] \cap \NB[0,k] = \emptyset$, 
then $\NB[0,k+m] \in \SAW$.
Thus, combining all the above we obtain that
$$ \tfrac{1}{n} \sum_{o \in G} \bP_o [ \NB[0,k+m] \in \SAW ] \geq 
\sr{ 1 - \tfrac{3(k+1) m}{2n} - m^2 \cdot (d-1)^{- \lfloor g_n / 2 \rfloor } } \cdot 
\tfrac{1}{n} \sum_{o \in G} \bP_o [ \NB[0,k] \in \SAW ] . $$
Since $G$ is transitive, $\Pr_o [ \NB[0,k] \in \SAW ]$ does not depend on the choice of starting vertex $o \in G$,
and we obtain the lemma.
\end{proof}

\begin{cor}
\label{cor: pk lower bound}
Let $(G_n)_n$ be a sequence of transitive graphs such that $|G_n| \nearrow \infty$.
Suppose that the mixing time of the non-backtracking random walk on $G_n$ satisfies 
$\tau_n = o( (d-1)^{g_n / 4} )$ as $n \to \infty$, where $g_n$ is the girth of $G_n$. 

Then, there exists a constant $c>0$ such that for every $\delta>0$
and all $k \leq \frac{\delta}{6} |G_n|$,
$\bP_o [ T>k] \geq c e^{- \delta k }$.
\end{cor}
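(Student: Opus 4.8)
I want to iterate Lemma~\ref{lem: lower bound}. That lemma says that for any $m \geq \tau_n$,
$$ \bP_o[T > k+m] \geq \Big( 1 - \tfrac{3(k+1)m}{2n} - m^2 (d-1)^{-\lfloor g_n/2 \rfloor} \Big) \cdot \bP_o[T > k]. $$
The idea is to pick a single step size $m = m_n$ that is at least $\tau_n$ but still small enough that the error term is controlled, and then apply the inequality roughly $k/m$ times to descend from $\bP_o[T>k]$ down to $\bP_o[T > m] $ or $\bP_o[T>0]=1$. Each application costs a multiplicative factor of the form $1 - \varepsilon_j$, and I want the product of these factors to be bounded below by $c e^{-\delta k}$.

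\textbf{Choice of parameters.} First I would fix the step size. Since $\tau_n = o((d-1)^{g_n/4})$, I can choose $m_n$ with $\tau_n \le m_n$ and also $m_n = o((d-1)^{g_n/4})$, for instance $m_n = \max\{\tau_n, \lceil (d-1)^{g_n/8} \rceil\}$ or simply $m_n$ growing like a slowly-increasing function dominating $\tau_n$; the key requirements are (i) $m_n \ge \tau_n$ so the lemma applies, (ii) $m_n^2 (d-1)^{-\lfloor g_n/2\rfloor} \to 0$ fast (which holds since $m_n = o((d-1)^{g_n/4})$ gives $m_n^2 (d-1)^{-\lfloor g_n/2 \rfloor} = o(1)$, in fact summably small after dividing by $m_n$), and (iii) $m_n \to \infty$ so that $m_n/n \to 0$. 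Then for $k \le \tfrac{\delta}{6} n$ and any $j \le k$, the factor appearing when we go from level $j$ up by $m_n$ is at least
$$ 1 - \tfrac{3(j+1)m_n}{2n} - m_n^2 (d-1)^{-\lfloor g_n/2 \rfloor} \ge 1 - \tfrac{3(k+1)m_n}{2n} - o(1) \ge 1 - \tfrac{\delta}{3}\cdot\tfrac{m_n(k+1)}{k} - o(1), $$
using $k \le \tfrac{\delta}{6} n$; I need to be a little careful but the upshot is that each factor is $\ge 1 - \tfrac{\delta}{2}\cdot\tfrac{m_n}{ n}\cdot(\text{something} \le 1)$ — more precisely I'll bound each factor over the whole run by $1 - \tfrac{\delta m_n}{3n} \cdot (\text{const})$ uniformly, so that after $\lceil k/m_n \rceil$ steps the product is at least $(1 - O(\delta m_n/n))^{k/m_n}$, which is $\ge e^{-O(\delta k/n)} \ge e^{-O(\delta)}$... but that gives a constant, not $e^{-\delta k}$. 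So I should instead \emph{not} be greedy: I keep the dependence $(j+1)m_n/n \le (k+1)m_n/n$, sum the $\log(1-\varepsilon_j) \ge -2\varepsilon_j$ contributions, getting $\sum_j \varepsilon_j \le \tfrac{k}{m_n}\cdot\big(\tfrac{3(k+1)m_n}{2n} + o(1)\big) \le \tfrac{3k(k+1)}{2n} + o(k/m_n)$. With $k \le \tfrac{\delta}{6}n$ the first piece is $\le \tfrac{\delta(k+1)}{4} \le \tfrac{\delta k}{2}$ for large $k$; the $o(k/m_n)$ term is $o(k)$, absorbable into $\delta k$ for large $n$. That yields $\bP_o[T>k] \ge \bP_o[T > m_n] \cdot e^{-\delta k}$ (after adjusting constants), and $\bP_o[T > m_n]$ is bounded below by a constant $c = c(\delta,d) > 0$ because — and this is where I use transitivity and the earlier structure — $T \ge g_n \to \infty$ deterministically, so for $n$ large, $m_n$ is dwarfed and we can lower-bound $\bP_o[T > m_n]$ by running the recursion itself from level $0$: $\bP_o[T > m_n] \ge 1 - \tfrac{3 m_n}{2n}\cdot\tfrac{?}{} - \ldots$, wait, that needs $k=0$ in the lemma, giving $\bP_o[T>m_n] \ge 1 - \tfrac{3m_n}{2n} - m_n^2(d-1)^{-\lfloor g_n/2\rfloor} \to 1$. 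Good — so in fact $\bP_o[T>m_n] \ge \tfrac12$ for large $n$, and for the finitely many small $n$ we just shrink $c$.

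\textbf{Main obstacle.} The delicate point is the bookkeeping in the iteration: the per-step error $\varepsilon_j$ grows linearly in the current level $j$, so naively multiplying $k/m_n$ factors each of size $\approx 1 - \tfrac{3(k+1)m_n}{2n}$ overcounts. I must sum $\sum_{j} \varepsilon_{j}$ over the actual levels $0, m_n, 2m_n, \dots$ and recognize this as $\approx \tfrac{3}{2n}\sum_{i} i m_n \cdot m_n \approx \tfrac{3}{4n}k^2$, which is the source of the factor $e^{-\Theta(k^2/n)}$; then the hypothesis $k \le \tfrac{\delta}{6}|G_n|$ is exactly what converts $\tfrac{k^2}{n} \le \tfrac{\delta k}{6}$ into the claimed $e^{-\delta k}$. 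I should also double check that the residual error $\sum_j m_n^2(d-1)^{-\lfloor g_n/2\rfloor}$ over $k/m_n$ steps, namely $k m_n (d-1)^{-\lfloor g_n/2\rfloor}$, is $o(k)$ — true since $m_n (d-1)^{-\lfloor g_n/2\rfloor} \le m_n (d-1)^{-g_n/2 + 1} = o(1)$ as $m_n = o((d-1)^{g_n/4})$. Finally, to be safe with the inequality $\log(1-\varepsilon) \ge -2\varepsilon$ I restrict attention to $n$ large enough that every $\varepsilon_j \le \tfrac12$ (which holds since each is $\le \tfrac{3(k+1)m_n}{2n} + o(1) \le \tfrac{\delta m_n}{4} \cdot(\dots) + o(1) \to 0$ only if $m_n = o(n)$... actually $\tfrac{3(k+1)m_n}{2n}$ with $k$ up to $\tfrac{\delta}{6}n$ is $\approx \tfrac{\delta m_n}{4}$ which does \emph{not} go to zero unless $m_n$ is bounded) — so instead I choose $m_n$ growing but with $m_n = o(\sqrt n)$ as well (compatible with $m_n \ge \tau_n$ since $\tau_n = O(\log n)$ in the cases of interest and in general $\tau_n = o((d-1)^{g_n/4})$ with $g_n = O(\log n)$ gives $\tau_n = o(n^{1/4}) = o(\sqrt n)$), which keeps each $\varepsilon_j$ bounded by, say, $\tfrac{3(k+1)m_n}{2n} \le \tfrac{\delta m_n}{4}\cdot\tfrac{k+1}{k}$, and with $m_n = o(\sqrt n)$ and $k \le \tfrac{\delta}{6}n$ this is still $o(1)$ only if $m_n = o(\sqrt n / \sqrt k)$... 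The cleanest fix: restrict $k$ in the iteration — split $[0,k]$ into blocks and bound $\varepsilon_j$ crudely by $\tfrac34 \le 1 - \tfrac14$ once and for all when $j m_n \le$ something, and note $\log(1-\varepsilon_j)$ is still $\ge -C\varepsilon_j$ as long as $\varepsilon_j \le 1 - \eta$; this only changes the constant in the exponent, and the constant is absorbed because $k^2/n \le \delta k/6$ has slack. I will present the argument with $m_n = \max\{\tau_n, \lfloor (d-1)^{g_n/8}\rfloor\}$, verify $m_n = o((d-1)^{g_n/4})$ and $m_n = o(n)$, and carry through the telescoping sum bound to land at $\bP_o[T>k] \ge c\, e^{-\delta k}$.
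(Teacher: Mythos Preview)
Your approach is essentially identical to the paper's: iterate Lemma~\ref{lem: lower bound} with a fixed step size $m \geq \tau_n$, telescope the resulting product, and recognize that the accumulated error in the exponent is of order $k^2/|G_n|$, which the hypothesis $k \leq \tfrac{\delta}{6}|G_n|$ then converts into $O(\delta k)$.

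The paper proceeds more directly than you do. It simply takes $m = \tau_n$ (there is no need for your $\max\{\tau_n, \lfloor(d-1)^{g_n/8}\rfloor\}$), checks that $m^2 (d-1)^{-\lfloor g_n/2 \rfloor} < \tfrac14$ for large $n$ (this is precisely where the hypothesis $\tau_n = o((d-1)^{g_n/4})$ enters), applies $1 - \xi \geq e^{-2\xi}$ for $\xi \leq \tfrac14$ at each step, and sums the exponents to obtain
\[
p_k \geq \exp\Big( -2km(d-1)^{-\lfloor g_n/2 \rfloor} - \tfrac{3k^2}{|G_n|} - \tfrac{3k}{|G_n|} \Big) \cdot \exp\Big( -2m^2(d-1)^{-\lfloor g_n/2 \rfloor} \Big).
\]
For $k \leq \beta|G_n|$ this yields $p_k \geq \exp(-\tfrac{1}{2m}k - 3\beta k - \tfrac{3}{|G_n|}k)$; setting $\beta = \tfrac{\delta}{6}$ and using $m \to \infty$ gives $p_k \geq e^{-1/2}e^{-\delta k}$ for large $n$. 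Your telescoping computation and your identification of the key sum $\sum_j \varepsilon_j \approx \tfrac{3k^2}{2n}$ are exactly this.

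Where you diverge from the paper is the long digression about whether each per-step factor satisfies $\varepsilon_j \leq \tfrac12$. The paper does not dwell on this: it simply invokes $1-\xi \geq e^{-2\xi}$ and moves on. Your concern that the worst-case $\varepsilon_j \approx \tfrac{\delta m_n}{4}$ may not be small when $m_n \to \infty$ is a legitimate reading of the quantifiers, but your attempted repairs (forcing $m_n = o(\sqrt n)$, appealing to $\tau_n = o(n^{1/4})$, or using $\log(1-\varepsilon) \ge -C\varepsilon$ for $\varepsilon \le 1-\eta$) either smuggle in extra hypotheses or still fail when $\varepsilon_j$ approaches $1$. None of these detours is needed to reproduce the paper's argument; you should drop them and follow the direct route with $m = \tau_n$.
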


\begin{proof}
Set $p_k = \bP_o[T>k]$.
If we choose $m=m_n=\tau_n$, we may take $n$ large enough so that 
$m^2 \cdot (d-1)^{-\lfloor g_n / 2 \rfloor } < \tfrac14$.
By Lemma \ref{lem: lower bound} (using the inequality $e^{-2 \xi} \leq 1 - \xi$ valid for all $\xi \leq \tfrac14$),
\begin{align*}
p_k & \geq \exp \sr{ - 2 m^2 \cdot (d-1)^{- \lfloor g_n/2 \rfloor } - \tfrac{3 (k-m) m}{|G_n|} - \tfrac{3m}{|G_n|} } \cdot p_{k-m} \\
& \geq \cdots \geq \exp \sr{ - \lfloor \tfrac{k}{m} \rfloor \cdot 2 m^2 \cdot (d-1)^{- \lfloor g_n/2 \rfloor } 
- \tfrac{3m}{|G_n|} \cdot \sum_{j=1}^{\lfloor k/m \rfloor } j m - \tfrac{3m}{|G_n|} \cdot \lfloor \tfrac{k}{m} \rfloor } \cdot p_m \\
& \geq \exp \sr{ - 2 k m \cdot (d-1)^{- \lfloor g_n / 2 \rfloor } - \frac{3 k^2}{|G_n|} - \frac{3k}{|G_n|} } \cdot 
\exp \sr{ - 2 m^2 \cdot (d-1)^{- \lfloor g_n/2 \rfloor } } .
\end{align*}
Thus, if $k \leq \beta |G_n|$ then,
\begin{align} \label{eqn: pk lower bound}
p_k & \geq e^{-1/2} \cdot \exp \sr{ - \tfrac{1}{2 m} k - 3 \beta k - \tfrac{3}{|G_n|} k } . 
\end{align}
Since $m = \tau_n \to \infty$ as $n \to \infty$, we have that if $\beta = \tfrac{\delta}{6}$ then for all $n$ large enough
(so that $2m >> \delta^{-1}$),
$p_k \geq e^{-1/2} \cdot e^{- \delta k}$.
\end{proof}

\begin{lem}[Super-critical phase] \label{lem: super critical}
Let $(G_n)_n$ be a sequence of transitive graphs such that $|G_n| \nearrow \infty$ with girth $g_n$.
Suppose that the mixing time of the non-backtracking random walk on $G_n$ satisfies 
$\tau_n = o( (d-1)^{g_n / 4} )$ as $n \to \infty$, where $g_n$ is the girth of $G_n$. 

Then, there exists a constant $c>0$ such that
for any $x > \tfrac1{d-1}$ there exists $n_0>0$ such that for all $n>n_0$,
$$ L_n(x) \geq c (1 \mn \log ((d-1)x) ) \cdot |G_n| . $$
\end{lem}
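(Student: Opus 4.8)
The plan is to exploit the lower bound on $\bP_o[T>k]$ from Corollary \ref{cor: pk lower bound} together with the expression \eqref{eqn: length} for $L_n(x)$ in terms of the self-intersection time $T$. Write $y = (d-1)x > 1$, so that $L_n(x) = \frac{y}{1-y} - \frac{\bE_o[Ty^T]}{\bE_o[1-y^T]}$; equivalently, using $Z_n(x) = \tfrac{d}{d-1}\sum_{k\geq 0}\bP_o[T>k]y^k$ and the identity preceding \eqref{eqn: length}, it is cleaner to go back to $L_n(x) = \E_{x,o_n,G_n}[|\o|]$ and bound it from below by truncating the sum defining the measure. Concretely, I would use that $L_n(x)+1 = (Z_n(x))^{-1}\cdot \tfrac{d}{d-1}\bE_o\big[\sum_{k\geq 0}(k+1)y^k\1{T>k}\big]$, so that
\begin{align*}
L_n(x)+1 = \frac{\sum_{k\geq 0}(k+1)\,y^k\,\bP_o[T>k]}{\sum_{k\geq 0}y^k\,\bP_o[T>k]}.
\end{align*}
This is the expected value of $k+1$ under the probability weights proportional to $y^k\bP_o[T>k]$, and since $y>1$ these weights push mass toward large $k$.

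Next I would plug in the bound $\bP_o[T>k]\geq c\,e^{-\delta k}$ valid for all $k\leq \tfrac{\delta}{6}|G_n|$ (Corollary \ref{cor: pk lower bound}), choosing $\delta$ small compared to $\log y$ — say $\delta = \tfrac12\log y$ if $\log y\leq 2$, and $\delta=1$ otherwise, which gives the $1\wedge\log((d-1)x)$ form in the conclusion. For the numerator I restrict the sum to $k$ in a window around $N := \tfrac{\delta}{6}|G_n|$, say $k\in[\tfrac12 N, N]$, where every summand is at least $\tfrac12 N\cdot y^{k}\cdot c e^{-\delta k} = \tfrac12 N c\,(ye^{-\delta})^k$ and $ye^{-\delta}>1$ by the choice of $\delta$; summing a geometric series with ratio $>1$ over this window, the numerator is at least of order $N\cdot(ye^{-\delta})^{N}$ up to constants. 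For the denominator I need a matching \emph{upper} bound: here I can use the trivial bound $\bP_o[T>k]\leq 1$ for all $k$ together with $\bP_o[T>k]=0$ once $k$ exceeds $|G_n|-1$ (a self-avoiding walk on $n$ vertices has length at most $n-1$, so $T\leq |G_n|$), giving $\sum_{k\geq 0}y^k\bP_o[T>k]\leq \sum_{k=0}^{|G_n|}y^k \leq \frac{y}{y-1}y^{|G_n|}$. Taking the ratio, the factors $y^{N}$ versus $y^{|G_n|}$ do not immediately cancel, so I would instead be more careful: bound the denominator's dominant contribution, the terms near $k=|G_n|$, against the numerator's terms near $k=|G_n|$ as well. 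In fact the cleanest route is to lower-bound $L_n(x)+1$ by $\E[k+1 \mid \text{weights}] \geq \tfrac12 N\cdot \Pr_{\text{weights}}[k\geq \tfrac12 N]$, and then show the weight mass on $\{k\geq \tfrac12 N\}$ is bounded below by a constant: the weight on $[\tfrac12 N,N]$ is at least $c\sum_{k=N/2}^{N}(ye^{-\delta})^k$, while the total weight is at most $\sum_{k=0}^{|G_n|}y^k\bP_o[T>k]$; since on $[0,N]$ we also have the lower bound $ce^{-\delta k}$, and the geometric series with ratio $ye^{-\delta}>1$ is dominated by its largest term, a constant fraction of the mass on $[0,N]$ sits in $[\tfrac12 N, N]$. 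The remaining mass on $(N,|G_n|]$ I control by comparing $y^k\cdot 1$ there with $y^k\cdot ce^{-\delta k}$ on the window $[\tfrac12 N,N]$: since both $(N,|G_n|]$ and $[\tfrac12 N,N]$ have length proportional to $|G_n|$ and the summand $y^k$ (ignoring the $\bP_o[T>k]$ factor) is largest near $k=|G_n|$, I would instead simply note $\bP_o[T>k]\le \bP_o[T>N]\cdot(\text{something})$ fails — so the honest fix is: for $k\in(N,|G_n|]$ use $\bP_o[T>k]\le\bP_o[T>k']$ is not monotone-helpful, hence use Corollary \ref{cor: pk lower bound} only as a lower bound and control the tail $(N,|G_n|]$ by $\sum_{k>N}y^k\bP_o[T>k]\le y^{|G_n|}\cdot|G_n|$, then observe that the \emph{same} factor appears if we also lower-bound the numerator using any window near $|G_n|$ where $\bP_o[T>k]$ could be as small as $0$ — which is the obstruction.

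\textbf{The main obstacle} is exactly this: Corollary \ref{cor: pk lower bound} only gives a useful lower bound on $\bP_o[T>k]$ for $k=O(|G_n|)$ with a \emph{small} implied constant, and it says nothing about $k$ close to $|G_n|$, where the weight $y^k$ is largest. The resolution I would carry out is to choose the comparison window at a \emph{fixed} scale $k\asymp c'|G_n|$ rather than near $|G_n|$: pick $\delta$ small, let $N=\tfrac{\delta}{6}|G_n|$, and restrict the \emph{entire} measure to $\{|\o|\le N\}$ by Markov/Chernoff — specifically, show $\Pr_{x,o_n,G_n}[\,|\o|>N\,]$ is bounded away from $1$, or directly show the partition-function contribution from $k>N$ is comparable to that from $k\le N$, using the crude $\bP_o[T>k]\le 1$ on top and the $ce^{-\delta k}$ bound below, with $ye^{-\delta}>1$ so both geometric sums are governed by their endpoints. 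Then $L_n(x)+1\ge \tfrac12 N\cdot\Pr_{x,o_n,G_n}[\,|\o|\in[\tfrac12 N,N]\,]\ge c''|G_n|$. Once the constant $c''$ is extracted (depending only on $d$ and on $\delta\asymp 1\wedge\log((d-1)x)$, through the geometric-series ratio $ye^{-\delta}$), and $n_0$ is chosen so that the hypothesis $k\le\tfrac{\delta}{6}|G_n|$ of Corollary \ref{cor: pk lower bound} covers the window and so that $\tau_n=o((d-1)^{g_n/4})$ has kicked in, the lemma follows. I expect the bookkeeping of which $c$ depends on what, and making the "tail beyond $N$ is not dominant" step rigorous with only the one-sided bound available, to be the only real work.
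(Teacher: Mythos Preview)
Your approach is the paper's approach, but you have talked yourself into a phantom obstacle. You want $L_n(x)+1 \geq m \cdot \Pr_{x,G_n}[\,|\o| \geq m\,]$ with $m \asymp \delta\,|G_n|$, and for this it suffices to bound $\Pr_{x,G_n}[\,|\o| < m\,]$ from \emph{above}. In that ratio the denominator is the full partition sum $\sum_{k\geq 0} y^k\,\bP_o[T>k]$, which you may \emph{lower}-bound by simply discarding all terms with $k>M$ (where $M=\lfloor \tfrac{\delta}{6}|G_n|\rfloor$); this only weakens the inequality. The tail $(M,|G_n|]$ therefore never needs to be estimated at all: whatever mass sits there belongs to $\{|\o|\geq m\}$, not to $\{|\o|<m\}$, so it can only help you. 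Your attempt to pin down the mass specifically on the window $[\tfrac12 N,N]$ is what created the difficulty.

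Concretely, the paper takes $\delta = 1 \mn \tfrac12\log y$ (so that $y e^{-\delta}\geq\sqrt y$), sets $M=\lfloor\tfrac{\delta}{6}|G_n|\rfloor$ and $m=\lfloor\tfrac{M-1}{4}\rfloor$, and bounds
\[
\Pr_{x,G_n}[\,|\o|\leq m\,]
= \frac{\sum_{k=0}^{m} y^k\,\bP_o[T>k]}{\sum_{k\geq 0} y^k\,\bP_o[T>k]}
\leq \frac{\sum_{k=0}^{m} y^k}{\,c\sum_{k=0}^{M} \sqrt{y}^{\,k}\,}
\leq C\,\frac{y^{m+1}}{\sqrt{y}^{\,M+1}}
\leq C\,y^{-m},
\]
using only the trivial $\bP_o[T>k]\leq 1$ in the numerator and Corollary \ref{cor: pk lower bound} in the (truncated) denominator. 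Then $L_n(x)\geq (1-Cy^{-m})\, m \geq c'\,(1\mn\log((d-1)x))\,|G_n|$ for $n$ large. No control whatsoever of $\bP_o[T>k]$ for $k>M$ is required.
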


\begin{proof}
Set $y=(d-1)x > 1$.  Choose $\delta>0$ small enough so that $\delta = 1 \mn \tfrac12 \log y$.
For any integers $0 < m < M \leq \tfrac{\delta}{6} |G_n|$ we have by Corollary \ref{cor: pk lower bound}, 
\begin{align*}
\sum_{k=0}^m y^k \bP_o[T>k] & \leq \frac{y^{m+1} - 1}{y-1} , \\
\sum_{k=0}^{M} y^k \bP_o[T>k] & \geq c \cdot \sum_{k=0}^M \sqrt{y}^k = c \cdot \frac{\sqrt{y}^{M+1} -1}{\sqrt{y}-1} .
\end{align*}
Choose $M = \lfloor \tfrac{\delta}{6} |G_n| \rfloor$ and $m = \lfloor \tfrac{M-1}{4} \rfloor$,
so that $y^{m+1} \sqrt{y}^{-M-1} \leq y^{-m}$.
Note that for some universal constant $c>0$ we have that $m \geq c \delta |G_n|$.
We can compute:
\begin{align*}
\Pr_{x,o_n,G_n} [ |\o| \leq m ] & = \frac{\sum_{k=0}^m y^k \bP_o[T>k] }{ \sum_{k=0}^{|G_n|-1} y^k \bP_o[T>k] } 
\leq c^{-1} \cdot \frac{y^{m+1} - 1}{\sqrt{y}^{M+1} -1} \cdot \frac{\sqrt{y}-1}{y-1} \\
& \leq c^{-1} \cdot \frac{y^{m+1} }{\sqrt{y}^{M+1} } \cdot \frac{1}{1- \sqrt{y}^{-M-1} } 
\leq C \cdot y^{- c \delta |G_n|} , 
\end{align*}
for some constant $C>0$.
Thus,
\begin{align*}
L_n(x) & \geq (1 - C y^{-c \delta |G_n|} ) \cdot c \delta |G_n| . 
\end{align*}
\end{proof}

\subsection{Critical phase}

\begin{lem}[Critical phase] \label{lem: critical}
For a sequence of $d$-regular transitive 
graphs $(G_n)_n$, of size $|G_n|=n$ and girth $g_n \to \infty$ as $n \to \infty$,
we have that at the critical sequence $x_n = \tfrac{1}{d-1}$,
$$ \tfrac12 \bE_o[T-1] \leq L_n(x_n) \leq \tfrac{d}{d-1} \bE_o [T] - 1 .  $$
Specifically, $L_n(x_n) \to \infty$ and
$L_n(x_n) \leq \tfrac{d}{d-1} (1+o(1) )  g_n (d-1)^{g_n} . $ 
So $L_n$ has neither sub-critical nor super-critical behavior.
\end{lem}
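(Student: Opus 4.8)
The plan is to reduce everything to the self-intersection time $T$ of the non-backtracking random walk on $G_n$, using the two identities already available at the critical point $y=1$: equation \eqref{eqn: partition function at 1}, namely $Z_n(x_n)=\tfrac{d}{d-1}\bE_o[T]$, and equation \eqref{eqn: length at 1}, namely $L_n(x_n)=\tfrac{\bE_o[T(T-1)]}{2\bE_o[T]}$. Since $G_n$ is finite the non-backtracking walk occupies at most $|G_n|$ distinct vertices, so $T\le|G_n|$ almost surely and all these expectations are finite.

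\textbf{The two inequalities.} For the upper bound $L_n(x_n)\le\tfrac{d}{d-1}\bE_o[T]-1$ I would just invoke Proposition \ref{prop: intersection}: $L_n(x_n)+1=I_n(x_n)Z_n(x_n)\le Z_n(x_n)=\tfrac{d}{d-1}\bE_o[T]$ because $I_n(x_n)\le1$. For the lower bound $L_n(x_n)\ge\tfrac12\bE_o[T-1]$, write $\bE_o[T(T-1)]=\bE_o[T^2]-\bE_o[T]\ge\bE_o[T]^2-\bE_o[T]=\bE_o[T]\,\bE_o[T-1]$ using $\bE_o[T^2]\ge(\bE_o[T])^2$, and divide by $2\bE_o[T]$.

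\textbf{Controlling $\bE_o[T]$.} The lower bound is immediate: a non-backtracking walk cannot revisit a vertex before it has closed a cycle, so $T\ge g_n$ a.s.\ (as already noted in the proof of Lemma \ref{lem: sub}); hence $\bE_o[T-1]\ge g_n-1\to\infty$, and with the previous paragraph this gives $L_n(x_n)\to\infty$. The upper bound $\bE_o[T]\le g_n(d-1)^{g_n}$ is the only real content of the lemma. Here is the idea: since $G_n$ is vertex-transitive with girth $g_n$, \emph{every} vertex lies on a cycle of length exactly $g_n$. Condition on $\NB[0,\sigma]$ with $\sigma=ig_n$, and suppose $T>\sigma$ (otherwise $T\le\sigma<\sigma+g_n$ already); the walk then sits at $v=\NB(\sigma)$, having arrived along some edge $\bar e$. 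The girth cycle through $v$ uses two distinct edges at $v$, at most one of which is $\bar e$, so traversing that cycle once in an admissible direction is a legal non-backtracking continuation of length $g_n$ returning to $v=\NB(\sigma)$, which forces $T\le\sigma+g_n$. As the conditional law of the next $g_n$ steps is uniform over the $(d-1)^{g_n}$ admissible non-backtracking continuations (and over the $d(d-1)^{g_n-1}$ of them when $\sigma=0$, where two are bad), in every case the bad fraction is at least $(d-1)^{-g_n}$ — using $2(d-1)\ge d$ for the $\sigma=0$ case. Hence $\bP_o[T>\sigma+g_n\mid\NB[0,\sigma]]\le1-(d-1)^{-g_n}$ on $\{T>\sigma\}$, so $\bP_o[T>ig_n]\le(1-(d-1)^{-g_n})^i$, and by monotonicity of $k\mapsto\bP_o[T>k]$,
$$\bE_o[T]=\sum_{k\ge0}\bP_o[T>k]\le g_n\sum_{i\ge0}\bP_o[T>ig_n]\le g_n\sum_{i\ge0}\bigl(1-(d-1)^{-g_n}\bigr)^i=g_n(d-1)^{g_n}.$$
This yields $L_n(x_n)\le\tfrac{d}{d-1}g_n(d-1)^{g_n}-1$, which is stronger than the stated bound.

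\textbf{Main obstacle and the last sentence.} The delicate point is precisely the paragraph above: one must see that transitivity plus girth $g_n$ forces a girth cycle through the \emph{current} vertex of the walk, and then that exactly one admissible orientation of it suffices to beat the forbidden last edge; the rest is bookkeeping of the $(d-1)^{-g_n}$ factor. Finally, for the closing remark of the lemma, sub-critical behavior (bounded $L$) is excluded by $L_n(x_n)\to\infty$, while volume-order ``super-critical'' behavior is excluded in the regime of interest, since $g_n(d-1)^{g_n}=o(|G_n|)$ whenever $g_n\le(1-\delta)\log_{d-1}|G_n|$ (e.g.\ for the large-girth expanders considered later).
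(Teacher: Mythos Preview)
Your proof is correct and follows the same approach as the paper: both use \eqref{eqn: partition function at 1}, \eqref{eqn: length at 1}, and Proposition~\ref{prop: intersection} for the two-sided inequality, and both bound $\bE_o[T]$ via the geometric decay $\bP_o[T>ig_n]\le(1-(d-1)^{-g_n})^i$ coming from a girth cycle through the current vertex. Your treatment is in fact slightly cleaner on two counts: you make explicit why the forbidden last edge cannot block both orientations of the girth cycle (the paper's line $\bP_v[\NB(g_n)=v]\ge(d-1)^{-g_n}$ leaves this implicit), and your summation $\bE_o[T]\le g_n\sum_i(1-(d-1)^{-g_n})^i=g_n(d-1)^{g_n}$ gives the bound without the $(1+o(1))$ factor that the paper picks up from its less direct estimate.
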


\begin{proof}
Using \eqref{eqn: partition function at 1} and \eqref{eqn: length at 1} we have that
$Z_{n}(\tfrac{1}{d-1}) = \tfrac{d}{d-1} \bE_o [T] \to \infty , $
and 
$$ L_n(\tfrac1{d-1}) = \frac{\bE_o[T^2] - \bE_o[T] }{2 \bE_o[T] } 
\geq \tfrac12 \bE_o[T] - \tfrac12 \to \infty , $$
where $T$ is the self intersection time of a non-backtracking random walk on $G_n$

Proposition \ref{prop: intersection} tells us that $L_n(x) +1 \leq Z_{n}(x)$, so that
$L_n(\tfrac1{d-1}) \leq \tfrac{d}{d-1} \bE_o[T] - 1$.

Since $g_n$ is the girth of $G_n$, we have $\bP_v [ \NB(g_n) = v ] \geq (d-1)^{-g_n}$.
Thus, for any $j \leq k -g_n$,
\begin{align*}
\bP [ \NB(j+g_n) = \NB(j) \ | \ \NB[0,j] ] & \geq (d-1)^{-g_n} . 
\end{align*}
Thus, for $k>g_n$,
\begin{align*}
\bP_o [ T > k ] & \leq \bP_o [ T > k-g_n ] \cdot \bP [ \NB(k) \neq \NB(k-g_n) \ | \ \NB[0,k-g_n] ]
\\
& \leq \bP_o [ T>k-g_n ] \cdot ( 1 - (d-1)^{-g_n} ) \\
& \leq \cdots \leq ( 1 - (d-1)^{-g_n} )^{\lfloor k/g_n \rfloor } .
\end{align*}
This easily shows that 
$$ \bE_o[T] \leq \frac{1}{ (1- (d-1)^{-g_n} ) \cdot (1 - (1- (d-1)^{-g_n} )^{1/g_n} ) } 
\leq (1+o(1)) \cdot g_n (d-1)^{g_n}   , $$
(using the inequalities $1-\xi \leq e^{-\xi}$ and $1-\tfrac12\xi \geq e^{-\xi }$, valid $0< \xi < \tfrac12$).
%
\end{proof}

\begin{rem}
By the proof of Corollary \ref{cor: pk lower bound}, 
as long as $\tau_n = o( (d-1)^{g_n/4} )$ as $n \to \infty$, 
taking $\beta = \tfrac{k}{|G_n|}$ in \eqref{eqn: pk lower bound},
we have that there exists a constant $c>0$ such that 
$$ \bP_o [ T>k ] \geq c\exp \sr{ -k \cdot \sr{ \tfrac1{2 \tau_n} + \tfrac{3(k+1)}{|G_n| } } } , $$
where $\tau_n$ is the mixing time of the non-backtracking random walk.
If we take $k = \lfloor \min \{ \tau_n , \sqrt{|G_n|} \} \rfloor$ 
we get that for some universal constant $c'>0$,
$\bP_o [ T>k ] \geq c' $, which implies that 
$ \bE_o [ T ] \geq c' \min \{ \tau_n , \sqrt{|G_n| } \} $ in this case.
\end{rem}

\subsection{Proofs of theorems for large girth graphs}

\begin{proof}[Proof of Theorem \ref{thm: large girth}]
This is just a combination of Lemmas \ref{lem: sub}, \ref{lem: super critical} and \ref{lem: critical}.
\end{proof}

Finally we prove Theorem \ref{thm: gamma}, which computes the critical exponent $\gamma$
for large girth graphs.

\begin{proof}[Proof of Theorem \ref{thm: gamma}]
Let $T$ be the self intersection time of a non-backtracking random walk on $G_n$.
Note that since $T \geq g_n$ a.s., which converges to infinity, 
we get using \eqref{eqn: partition function at 1} and \eqref{eqn: length at 1}
$Z_{G_n}(\tfrac{1}{d-1}) = \tfrac{d}{d-1} \bE_o [T] \to \infty , $
and 
$$ L(\tfrac1{d-1},G_n) = \frac{\bE_o[T^2] - \bE_o[T] }{2 \bE_o[T] } 
\geq \tfrac12 \bE_o[T] - \tfrac12 \to \infty . $$
So for the critical sequence $x_n = \tfrac1{d-1}$,
\begin{align*}
 \lim_{n \to \infty} \frac{ \log Z_{G_n}(x_n) }{ \log ( L(x_n,G_n)+1) } 
& \leq \lim_{n \to \infty} \frac{ \log \tfrac{d}{d-1} + \log \bE_o [T] }{ \log \bE_o[T] + \log (\bE_o[T]+1) - \log \bE_o[T] - \log 2 }  = 1.
\end{align*}
The inequality $\gamma \geq 1$ is immediate from Proposition \ref{prop: intersection}, since 
$L(x_n,G_n) +1 \leq Z_{G_n}(x_n)$.

%

The sub-critical case $x_n \to x < \tfrac{1}{d-1}$ just follows from plugging in the values 
of $L,Z$ from \eqref{eqn: partition function} and \eqref{eqn: length}.

For the super-critical case $x_n \to x > \tfrac{1}{d-1}$,
as long as $\tau_n = o((d-1)^{g_n/4})$ we can duplicate the 
super-critical case from the proof of Theorem \ref{thm: gamma MF}:
Set $z_n = \tfrac12 \cdot (\tfrac{1}{d-1} + x_n)$, so that $z_n \to z : = \tfrac12 (\tfrac1{d-1} + x) > \tfrac1{d-1}$.
By Lemma \ref{lem: super critical}, for any $w$ we have  
$$ L(z_n,G_n) \geq c (1 \mn \log ((d-1)z_n) ) \cdot |G_n| \geq c (1 \mn \log ((d-1)z_n) ) \cdot L(w,G_n) . $$
Thus,
$$ \lim_{n \to \infty} \frac{L(z_n,G_n) }{\log ( L(z_n,G_n) +1 ) } = \infty \AND 
\lim_{n \to \infty} \frac{ \log L(x_n,G_n) }{ \log L(z_n,G_n) } = 1  . $$

Consider $f(x) : = \log Z_{G_n}(x)$.  Note that $x f'(x) = L(x,G_n)$. So we have that for $x>z$, there exists $w \in [z,x]$ 
such that  
$$ \log Z_{G_n}(x) - \log Z_{G_n}(z) = f'(w) (x-z) = \tfrac{1}{w} L(w,G_n) (x-z) \geq \tfrac{x-z}{x} \cdot L(z,G_n) . $$
Thus, for $x_n,z_n$ as above,
\begin{align*}
\lim_{n \to \infty} \frac{ \log Z_{G_n}(x_n) }{ \log (L(x_n,G_n)+1) } & \geq 
1+ \lim_{n \to \infty} \frac{  (x_n-z_n) \cdot L(z_n,G_n) }{ x_n \log (L(z_n,G_n) + 1) } = \infty . 
\end{align*}
\end{proof}

\appendix

\section{Complete graph proofs}

\subsection{Mean field super-critical regime}

First a classical large deviations argument, which we include for completeness.

\begin{prop}
\label{prop: Poisson LD}
Let $P \sim \mathrm{Poi}(x^{-1})$.  Then, for all $n > x^{-1}$,
$$ \Pr [ P \geq n ] \leq  ( xn )^{-n} \cdot e^{n -\tfrac1x}   . $$
\end{prop}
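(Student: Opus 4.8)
This is a standard Chernoff/exponential Markov bound for a Poisson random variable. The plan is to apply Markov's inequality to $e^{tP}$ for a well-chosen $t>0$.

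First I would recall that the moment generating function of $P \sim \mathrm{Poi}(\lambda)$ with $\lambda = x^{-1}$ is $\bE[e^{tP}] = \exp(\lambda(e^t-1))$ for all $t \in \R$. Then by Markov's inequality, for any $t>0$,
\begin{align*}
\Pr[P \geq n] &= \Pr[e^{tP} \geq e^{tn}] \leq e^{-tn} \bE[e^{tP}] = \exp\sr{ \lambda(e^t-1) - tn } .
\end{align*}
Next I would optimize over $t>0$: the exponent $\lambda(e^t-1) - tn$ is minimized where $\lambda e^t = n$, i.e.\ at $e^t = n/\lambda = xn$, which is a legitimate choice of $t>0$ precisely because $n > x^{-1} = \lambda$ forces $xn > 1$ and hence $t = \log(xn) > 0$. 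Substituting this value gives exponent $\lambda(xn - 1) - n\log(xn) = n - \tfrac1x - n\log(xn)$, so that
\begin{align*}
\Pr[P \geq n] &\leq \exp\sr{ n - \tfrac1x - n \log(xn) } = (xn)^{-n} \cdot e^{n - \tfrac1x} ,
\end{align*}
which is exactly the claimed bound.

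I do not anticipate any real obstacle here — the only things to be careful about are that the optimizing $t$ is indeed positive (handled by the hypothesis $n > x^{-1}$) and that the algebra in the exponent is carried out correctly. One could alternatively avoid calculus and simply plug $t = \log(xn)$ into the Markov bound directly, verifying the inequality without invoking optimality; this makes the argument completely self-contained. Either way the proof is a few lines.
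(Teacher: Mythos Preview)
Your proof is correct and essentially identical to the paper's own argument: the paper also computes the Laplace transform, applies Markov's inequality, minimizes the exponent to obtain $\lambda = \log(xn)$ (noting this is positive since $xn>1$), and arrives at the same bound.
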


\begin{proof}
For any $\lambda > 0$, the Laplace transform of $P$ is
$$ \E [ e^{\lambda P} ] = \exp \sr{ x^{-1} \cdot ( e^{\lambda} - 1 ) } . $$
Thus, for any $\lambda > 0$,
$$ \Pr [ P \geq n ] \leq \E [ e^{\lambda P} ] e^{-\lambda n }
= \exp \sr{ x^{-1} \cdot ( e^{\lambda} - 1 ) - \lambda n } . $$
Minimizing the right hand side above over $\lambda$, we obtain
$\lambda = \log (xn)$ (which is good since we assumed $xn>1$),
so
$$ \Pr [ P \geq n ] \leq \sr{ \tfrac{e}{xn} }^n \cdot e^{-1/x} . $$
\end{proof}

\begin{lem}
\label{lem: super-critical MF}
For $\eps >0$ and $x = \frac{1+\eps}{n-1}$,
we have 
$$  \abs{  L(x,K_n) - \frac{\eps}{1+\eps} (n-1) } \leq
\frac{ n I(\eps)^{n/2} }{1- I(\eps)^n } , $$
where $I(\eps) = \max \set{ e^{-\eps^2/8} , \sqrt{e}/2 }$.
Specifically, if $(\eps_n)_n$ is a sequence converging to $\Ee \in [0, \infty]$ such that 
$n \cdot \eps_n^2 \to \infty$ as $n \to \infty$, then for the sequence $x_n = \frac{1+\eps_n}{n}$,
$$ \lim_{n \to \infty} \frac{ L(x_n,K_n) }{n-1} = \frac{\Ee}{1+\Ee} , $$
where $\frac{\infty}{1+ \infty } = 1$.
\end{lem}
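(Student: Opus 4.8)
The plan is to exploit the exact identity \eqref{eqn: Poisson MF}, namely $L(x,K_n) = n-1 - \E[P \mid P \leq n-1]$ with $P \sim \mathrm{Poi}(x^{-1})$, and reduce everything to estimating the conditional expectation of a Poisson variable truncated to $\{0,\dots,n-1\}$. With $x = \frac{1+\eps}{n-1}$ we have $x^{-1} = \frac{n-1}{1+\eps}$, so $P$ has mean $\frac{n-1}{1+\eps}$, which is (for $\eps>0$) bounded away from $n-1$; intuitively the conditioning event $\{P \leq n-1\}$ has probability tending to $1$, so $\E[P \mid P \leq n-1] \approx \E[P] = \frac{n-1}{1+\eps}$, giving $L(x,K_n) \approx (n-1)\bigl(1 - \tfrac{1}{1+\eps}\bigr) = \frac{\eps}{1+\eps}(n-1)$. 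The whole game is to make the error explicit and uniform.

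First I would write $\E[P \mid P \le n-1] = \frac{\E[P\,\1{P \le n-1}]}{\Pr[P \le n-1]}$ and decompose $\E[P] = \E[P\,\1{P\le n-1}] + \E[P\,\1{P \ge n}]$. The two quantities to control are the upper tail probability $\Pr[P \ge n]$ and the truncated tail expectation $\E[P\,\1{P\ge n}]$. For the first, Proposition \ref{prop: Poisson LD} already gives $\Pr[P \ge n] \le (xn)^{-n} e^{n - 1/x}$; with $xn = \frac{(1+\eps)n}{n-1} \ge 1+\eps$ (and a cruder but cleaner bound), a standard manipulation of $e^{-1/x}/(\text{something})$ against $(1+\eps)^{-n}$ should produce a bound of the shape $C\cdot I(\eps)^n$ with $I(\eps) = \max\{e^{-\eps^2/8}, \sqrt{e}/2\}$ — the two cases in the max corresponding to the regime of small $\eps$ (where $\log(1+\eps) - \tfrac{\eps}{1+\eps} \gtrsim \eps^2$, a Taylor estimate) and the regime of $\eps$ bounded away from $0$ (where one just uses $\eps \ge$ const to get geometric decay with ratio near $\sqrt e/2 < 1$). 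For the truncated tail expectation, I would use the identity $\E[P\,\1{P \ge n}] = x^{-1}\Pr[P \ge n-1]$ (since $k\cdot \Pr[\mathrm{Poi}(\lambda)=k] = \lambda\Pr[\mathrm{Poi}(\lambda)=k-1]$), reducing it again to a tail probability already controlled by Proposition \ref{prop: Poisson LD}; the prefactor $x^{-1} = O(n)$ is exactly what produces the $n$ in the numerator $\frac{nI(\eps)^{n/2}}{1-I(\eps)^n}$. Assembling these: $\bigl|L(x,K_n) - \tfrac{\eps}{1+\eps}(n-1)\bigr| = \bigl|\E[P\mid P\le n-1] - \E[P]\bigr|$, and writing this difference over the common structure yields a numerator bounded by (tail prob)$\times \E[P]$ plus (truncated tail expectation), all divided by $\Pr[P \le n-1] \ge 1 - I(\eps)^n$; being slightly generous with powers of $n$ versus the exponential decay collapses everything into the stated form. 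I should double check the exponents so that $I(\eps)^{n/2}$ (rather than $I(\eps)^n$) genuinely suffices to absorb the polynomial factors — this bookkeeping, not any deep idea, is the main obstacle.

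For the asymptotic consequence, suppose $\eps_n \to \Ee \in [0,\infty]$ with $n\eps_n^2 \to \infty$. Then $I(\eps_n)^{n/2} \le \max\{e^{-n\eps_n^2/16}, (\sqrt e/2)^{n/2}\} \to 0$ since $n\eps_n^2 \to \infty$ handles the first term and $\sqrt e/2 < 1$ handles the second (valid once $\eps_n$ is, say, above any fixed threshold — and if $\eps_n$ stays bounded the first term already wins). Also $I(\eps_n)^n \to 0$, so the denominator $1 - I(\eps_n)^n \to 1$. Hence $\frac{nI(\eps_n)^{n/2}}{1-I(\eps_n)^n} = o(n)$, i.e. $\frac{1}{n-1}\cdot\frac{nI(\eps_n)^{n/2}}{1-I(\eps_n)^n} \to 0$, and dividing the main inequality by $n-1$ gives $\frac{L(x_n,K_n)}{n-1} \to \frac{\Ee}{1+\Ee}$, with the convention $\frac{\infty}{1+\infty}=1$ covering the case $\Ee = \infty$ (there $\frac{\eps_n}{1+\eps_n} \to 1$). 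A minor technical point is that the lemma writes $x_n = \frac{1+\eps_n}{n}$ while the explicit bound uses $\frac{1+\eps_n}{n-1}$; the two differ by a factor $1 + O(1/n)$, which one absorbs either by a monotonicity argument (Theorem \ref{thm: L increasing}) sandwiching $L(\tfrac{1+\eps_n}{n},K_n)$ between $L(\tfrac{1+\eps_n'}{n-1},K_n)$ for nearby $\eps_n'$, or simply by noting the replacement changes $\eps_n$ by $o(\eps_n) + o(1)$, preserving both hypotheses $\eps_n\to\Ee$ and $n\eps_n^2\to\infty$.
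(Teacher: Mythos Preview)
Your proposal is correct and follows essentially the same route as the paper: both start from the Poisson representation \eqref{eqn: Poisson MF}, bound $\Pr[P \ge n]$ via Proposition~\ref{prop: Poisson LD} (the paper first passes to $Q \sim \mathrm{Poi}(n/(1+\eps))$ by stochastic domination to tidy the arithmetic, which is the ``cruder but cleaner bound'' you allude to), and then assemble the two-sided estimate for $\E[P \mid P \le n-1]$. The one noteworthy difference is in how $\E[P\,\1{P \ge n}]$ is controlled: the paper uses Cauchy--Schwarz, $\E[P\,\1{P \ge n}] \le \sqrt{\E[P^2]\,\Pr[P\ge n]} \le n\,I(\eps)^{n/2}$, and this halving of the exponent is precisely why the statement carries $I(\eps)^{n/2}$ rather than $I(\eps)^{n}$; your Poisson shift identity $\E[P\,\1{P \ge n}] = x^{-1}\Pr[P \ge n-1]$ is in fact sharper (yielding roughly $n\,I(\eps)^{n}$) and arguably more transparent, so the ``bookkeeping'' you worry about is easier, not harder, than in the paper's version.
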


\begin{proof}
Let $P \sim \mathrm{Poi}(x^{-1})$, and let $Q \sim \mathrm{Poi}( \tfrac{n}{1+\eps} )$.
So $P$ is stochastically dominated by $Q$.
Thus, by Proposition \ref{prop: Poisson LD},
\begin{align*}
\Pr [ P \geq n ] & \leq \Pr [ Q \geq n ] \leq (1+\eps)^{-n} e^n e^{-\tfrac{n}{1+\eps} } 
= \exp \sr{ \tfrac{\eps}{1+\eps} n } \cdot (1+\eps)^{-n } . 
\end{align*}
We use the inequality
$ e^{\xi} \leq 1 + \xi + \tfrac{\xi^2}{2} e^{\xi}$, so if $\xi <1$,
then $e^{\xi} \leq \frac{ 1 + \xi}{1 - \xi^2 /2 } $.
Plugging in $\xi = \frac{\eps}{1+\eps}$, we obtain that when $\eps < 1$,
$$ \exp \sr{ \tfrac{\eps}{1+\eps} } (1+\eps)^{-1} \leq \frac{1 + 2\eps}{ 1 + 2 \eps + \eps^2 / 2} \leq 1 - \tfrac{\eps^2}{8} , $$
and if $\eps \geq 1$ then since $\xi e^{-\xi}$ increases when $0<\xi < 1$, with $\xi = \frac{1}{1+\eps}$ we get
$$ \exp \sr{ \tfrac{\eps}{1+\eps} } (1+\eps)^{-1} \leq e \cdot \xi e^{-\xi} \leq \frac{\sqrt{e}}{2} . $$
So we get that $ \Pr [ P \geq n ] \leq I(\eps)^n . $
Thus, for some $c = c(\eps)$,
$$ \E [ P | P \leq n-1 ] \leq \frac{\E[P]}{ 1 - \Pr [ P \geq n ] } \leq \frac{x^{-1} }{ 1 -  I(\eps)^n  }
\leq \frac{n-1}{1+\eps} \cdot \sr{ 1 + \tfrac{I(\eps)^n }{1-I(\eps)^n } } . $$

We use this bound with \eqref{eqn: Poisson MF} to obtain
that for $x \geq \frac{1+\eps}{n-1}$,
\begin{align*}
L(x,K_n)  & = n-1 - \E [ P | P \leq n-1 ] \geq (n-1) \cdot 
\sr{ 1 - \tfrac{1}{1+\eps} \cdot \sr{ 1 + \tfrac{I(\eps)^n }{1-I(\eps)^n } } } \\
& \geq \frac{ \eps (n-1)}{1+ \eps} - \frac{n I(\eps)^n }{1-I(\eps)^n }  .
\end{align*}
This proves the lower bound on $L(x,K_n)$.

For the upper bound, note that by Cauchy-Schwarz
\begin{align*}
\E [ P \1{ P \geq n} ] & \leq \sqrt{ \E[P^2]  \cdot \Pr [ P \geq n ] } \leq \sqrt{ x^{-2} + x^{-1} } \cdot I(\eps)^{n/2} \\
& \leq  n \cdot I(\eps)^{n/2} .
\end{align*}
Thus, 
\begin{align*}
L(x,K_n) & = n-1 - \E [ P | P \leq n-1 ] = n-1 - \frac{ \E[P ]  - \E[P \1{P \geq n } ] }{ 1 - \Pr [ P \geq n ] } \\
& \leq n-1 - \E[P]  + \frac{ \E[ P \1{ P \geq n } ] }{ 1 - \Pr [ P \geq n ] }
\leq \frac{\eps (n-1) }{1+ \eps} + \frac{n I(\eps)^{n/2} }{ 1-I(\eps)^n } . 
\end{align*}
(In the second inequality is where we use that $\E[P] = x^{-1} = \frac{n-1}{1+\eps}$.)
\end{proof}

\begin{rem}
Note that Lemma \ref{lem: super-critical MF} gives more than required for the super-critical phase  in 
Theorem \ref{thm: mean field}.
For $\eps_n >> n^{-1/2}$, 
we have that the expected length $L(x,K_n)$ 
is very near $\frac{\eps_n (n-1)}{1+ \eps_n}$ as $n \to \infty$.
\end{rem}

\subsection{Mean field critical regime}

\begin{lem}
\label{lem: critical MF}
There exists a constant $\alpha > 0$ such that for $x = x_n = \frac1{n-1}$,
$$ \lim_{n \to \infty} \frac{L(x_n,K_n) }{\sqrt{n-1} } = \alpha . $$
\end{lem}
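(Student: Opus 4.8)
The plan is to use the representation \eqref{eqn: Poisson MF}, namely $L(x_n, K_n) = (n-1) - \E[P \mid P \leq n-1]$ where $P \sim \mathrm{Poi}(\lambda_n)$ with $\lambda_n = x_n^{-1} = n-1$. So $P$ is a Poisson random variable whose mean equals exactly the truncation point (minus nothing — $\lambda_n = n-1$ and we condition on $P \leq n-1$). The whole problem reduces to understanding $\E[(n-1) - P \mid P \leq n-1]$, i.e. how far below its mean a Poisson variable sits once we condition it to be at most its mean. This is a classical ``half-normal'' type computation: by the central limit theorem $\frac{P - \lambda_n}{\sqrt{\lambda_n}}$ converges to a standard normal $N$, and conditioning on $P \leq \lambda_n$ is (asymptotically) conditioning on $N \leq 0$, so one expects $\frac{(n-1) - P}{\sqrt{n-1}}$ conditioned on $P \leq n-1$ to converge in distribution to $-N$ conditioned on $N \leq 0$, whose expectation is $\E[|N| \mid N \leq 0] = \E|N| = \sqrt{2/\pi}$. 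Thus the natural guess is $\alpha = \sqrt{2/\pi}$.

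First I would make precise the decomposition
\begin{align*}
\frac{L(x_n,K_n)}{\sqrt{n-1}} = \frac{\E\br{(\lambda_n - P)\1{P \leq \lambda_n}}}{\sqrt{\lambda_n}\cdot \Pr[P \leq \lambda_n]},
\end{align*}
with $\lambda_n = n-1$. The denominator factor $\Pr[P \leq \lambda_n] \to \tfrac12$ by the CLT (the continuity of the normal distribution handles the lattice point at $\lambda_n$ itself, which carries mass $O(\lambda_n^{-1/2}) \to 0$). For the numerator I would write $\E\br{(\lambda_n - P)\1{P \leq \lambda_n}} = \sqrt{\lambda_n}\cdot \E\br{\big(-W_n\big)\1{W_n \leq 0}}$ where $W_n = (P - \lambda_n)/\sqrt{\lambda_n}$, and then argue that $\E\br{(-W_n)\1{W_n \leq 0}} \to \E\br{(-N)\1{N \leq 0}} = \tfrac{1}{\sqrt{2\pi}}$. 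Combining, $\alpha = \tfrac{1}{\sqrt{2\pi}} / \tfrac12 = \sqrt{2/\pi}$.

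The one genuine obstacle is justifying the convergence of the \emph{truncated first moment} $\E\br{(-W_n)\1{W_n \leq 0}}$, since this is not merely convergence in distribution — it requires uniform integrability of the family $\set{(W_n)_- : n}$ (where $(\cdot)_-$ denotes negative part). I would handle this by a second-moment bound: $\E[W_n^2] = \E\br{(P-\lambda_n)^2}/\lambda_n = \Var(P)/\lambda_n = 1$, so $\set{W_n}$ is bounded in $L^2$, hence uniformly integrable, which upgrades the distributional convergence $W_n \toD N$ (standard local/integral CLT for Poisson with parameter $\to \infty$) to convergence of first moments of $(-W_n)\1{W_n \leq 0}$. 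A clean way to package all of this: use that Poisson($\lambda_n$) is a sum of $\lfloor \lambda_n\rfloor$ i.i.d. Poisson($1$)'s plus a small remainder, so standard i.i.d. CLT with $L^2$ control applies directly; the remainder term contributes $o(\sqrt{\lambda_n})$ to everything in sight. One then needs only to note $\E\br{(-N)\1{N\leq 0}} = \int_{-\infty}^0 (-t) \tfrac{1}{\sqrt{2\pi}} e^{-t^2/2}\, dt = \tfrac{1}{\sqrt{2\pi}}$, a direct computation. This yields the lemma with the explicit constant $\alpha = \sqrt{2/\pi}$, consistent with the ``universal constant'' promised in Theorem \ref{thm: mean field}.
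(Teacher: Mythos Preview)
Your proof is correct and follows essentially the same approach as the paper: represent $n-1-|\o|$ as a conditioned Poisson via \eqref{eqn: Poisson MF}, apply the CLT to $W_n=(P-\lambda_n)/\sqrt{\lambda_n}$, and deduce that the conditional mean converges to $-\E[N \mid N \leq 0] = \E|N| = \sqrt{2/\pi}$. If anything, your treatment is more careful than the paper's, which simply invokes the Portmanteau theorem for the step $\E[X_n \mid X_n \leq 0] \to \E[N \mid N \leq 0]$ without explicitly addressing the uniform integrability that you supply via the $L^2$ bound $\E[W_n^2]=1$.
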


\begin{proof}
Let $(P_k)_k$ be i.i.d.\ Poisson-$1$ random variables, and let $P = \sum_{k=1}^{n-1} P_k$.
So $P \sim \mathrm{Poi}(x^{-1})$.
Since $\E[P] = n-1$ and $\Var[P] = n-1$ we have by the central limit theorem that the sequence
$X_n : = \frac{P - (n-1)}{\sqrt{n-1} }$ converges in distribution to a standard Gaussian $N \sim \mathcal{N}(0,1)$.

From this, a simple application of the Portmanteau Theorem gives that 
$$ \frac{\E [ P | P \leq n-1 ] - (n-1) }{\sqrt{n-1} } = \E [ X_n | X_n \leq 0 ] \to \E [ N | N \leq 0 ] . $$
So setting $\alpha : = -\E[N | N \leq 0] = \E |N| > 0$ we have that when $x = \frac1{n-1}$,
$$ \tfrac{1}{\sqrt{n-1}} \cdot L(x_n,K_n) \to \alpha . $$
\end{proof}

\subsection{Mean field sub-critical regime}

\begin{lem}
\label{lem: sub-critical MF}
For any $\eps , \delta>0$ there exists $n_0>0$ such that for all $n>n_0$, 
if $x = \frac{1-\eps}{n-1}$ then,
$$ \frac{1-\eps}{\eps} \cdot e^{-\delta} \cdot \sr{ 1 - \tfrac3n } \leq L(x,K_n) \leq \frac{1-\eps}{ \eps } . $$
Consequently, if $(x_n)_n$ is such that $\lim_{n\to \infty} (n x_n) = 1-\eps$ then
$$ \lim_{n \to \infty} L(x_n,K_n) = \frac{1-\eps}{\eps} . $$
\end{lem}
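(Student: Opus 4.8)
The plan is to use the key representation \eqref{eqn: Poisson MF}, namely $L(x,K_n) = n-1 - \E[P \mid P \leq n-1]$ with $P \sim \mathrm{Poi}(x^{-1})$, and to control the conditional expectation $\E[P \mid P \leq n-1]$ when $x = \frac{1-\eps}{n-1}$, so that $\E[P] = x^{-1} = \frac{n-1}{1-\eps}$. The heuristic is that since the Poisson mean $\frac{n-1}{1-\eps}$ exceeds the truncation level $n-1$, conditioning on $P \leq n-1$ pushes the distribution well below $n-1$, so that $\E[P \mid P \le n-1]$ is of order $n - \Theta(n)$, and the gap $n-1 - \E[P \mid P \leq n-1]$ stabilizes to $\frac{1-\eps}{\eps}$.

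For the \textbf{upper bound} on $L$ (equivalently, the lower bound $\E[P \mid P \leq n-1] \geq n-1 - \frac{1-\eps}{\eps}$), I would use the exact recursion for the conditional Poisson expectation. Writing $p_k = \Pr[P = k]$ and $F = \Pr[P \leq n-1]$, one has $\E[P \mid P \le n-1] = \frac1F \sum_{k=0}^{n-1} k p_k$. Using $k p_k = x^{-1} p_{k-1}$, this becomes $\frac{x^{-1}}{F} \sum_{k=0}^{n-2} p_k = \frac{x^{-1}}{F}(F - p_{n-1}) = x^{-1}(1 - p_{n-1}/F)$. Hence $L(x,K_n) = n-1 - x^{-1} + x^{-1} p_{n-1}/F$. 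Since $x^{-1} = \frac{n-1}{1-\eps}$, we get $n-1-x^{-1} = -\frac{\eps}{1-\eps}(n-1)$... which is negative, so the error term $x^{-1} p_{n-1}/F$ must dominate. This shows the naive bound is too crude, so instead I would work directly: $L(x,K_n) = -\frac{\eps(n-1)}{1-\eps} + \frac{x^{-1} p_{n-1}}{F}$, and the whole content is estimating $p_{n-1}/F$. By a local CLT / Stirling estimate, $p_{n-1} = \Pr[\mathrm{Poi}(\tfrac{n-1}{1-\eps}) = n-1] \approx \frac{1}{\sqrt{2\pi (n-1)/(1-\eps)}} \cdot \left((1-\eps)e^{\eps}\right)^{n-1}$, which decays geometrically; similarly $F \to $ something comparable, and a careful summation of the geometric-like tail $\sum_{j \geq 1} p_{n-1-j}$ gives $F \approx p_{n-1} \cdot \frac{1}{1 - (1-\eps)} \cdot(1+o(1)) = \frac{p_{n-1}}{\eps}(1+o(1))$ (using that consecutive ratios $p_{k-1}/p_k = k/x^{-1} \to 1-\eps$ near $k = n-1$). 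Thus $x^{-1} p_{n-1}/F \to x^{-1}\eps = \frac{\eps(n-1)}{1-\eps} + \frac{1-\eps}{\eps}\cdot\eps / \dots$; reorganizing, $L(x,K_n) \to \frac{1-\eps}{\eps}$.

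Given the stated bounds in the lemma (the explicit $e^{-\delta}(1-\tfrac3n)$ factor), I suspect the intended proof is cleaner: I would (i) get the upper bound $\E[P \mid P \leq n-1] \geq \E[P] - \E[P\ind{P \geq n}] = x^{-1} - \E[P\ind{P\ge n}]$ divided by $F \le 1$ won't help directly either. The honest approach: directly estimate the ratio $\frac{\Pr[P = n-1-j]}{\Pr[P \le n-1]}$ for each fixed $j \ge 0$, showing it converges to $\eps(1-\eps)^j$ (a geometric distribution with parameter $\eps$), so that $n-1-P$ conditioned on $P \le n-1$ converges in distribution to $\mathrm{Geom}(\eps)$ with mean $\frac{1-\eps}{\eps}$; then upgrade convergence in distribution to convergence of expectations via a uniform tail bound (dominated convergence), using the large-deviation estimate of Proposition~\ref{prop: Poisson LD} to control $\E[(n-1-P)\ind{P < n-1-M}] = \E[(n-1-P)\ind{n-1-P > M}]$ uniformly in $n$. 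The pointwise ratio computation uses $\frac{\Pr[P=n-1-j]}{\Pr[P=n-1]} = \prod_{i=0}^{j-1}\frac{n-1-i}{x^{-1}} \to (1-\eps)^j$ and $\Pr[P \le n-1] = \Pr[P = n-1]\sum_{j\ge 0}\frac{\Pr[P=n-1-j]}{\Pr[P=n-1]} \to \Pr[P=n-1]\cdot\frac{1}{\eps}$, with the interchange of limit and sum justified by domination (the ratios are bounded by $(1-\eps/2)^j$ for $j$ not too large, and the far tail is negligible by Proposition~\ref{prop: Poisson LD}).

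The main obstacle is making the tail control uniform in $n$: the ratio $\frac{n-1-i}{x^{-1}} = \frac{(n-1-i)(1-\eps)}{n-1}$ equals $1-\eps$ only in the limit, and for $i$ comparable to $n$ it drops toward $1-\eps$ times a smaller factor — but crucially it never exceeds $1-\eps$ for $i\geq 0$ and $n$ large, so $\Pr[P = n-1-j] \leq \Pr[P=n-1](1-\eps)^j$ for all $j \le n-1$ once $n$ is large, which immediately gives both a clean upper bound $\Pr[P\le n-1] \le \Pr[P=n-1]/\eps$ and a summable dominating function $j(1-\eps)^j$ for the expectation $\E[(n-1-P)\ind{P\le n-1}] = \Pr[P\le n-1]^{-1}\sum_j j\Pr[P=n-1-j]$. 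The matching lower bound on each term, needed for the reverse direction, requires restricting to $j \le \delta' n$ so that $\frac{n-1-j}{x^{-1}} \geq (1-\eps)(1-\delta')$, which contributes the $e^{-\delta}$-type slack appearing in the statement; the leftover mass from $j > \delta' n$ is again killed by Proposition~\ref{prop: Poisson LD}. Assembling these pieces yields $L(x_n,K_n) \to \frac{1-\eps}{\eps}$, and since this limit depends only on $\eps = \lim(1 - nx_n)$, the final sentence of the lemma follows by applying the estimates to any sequence $x_n$ with $nx_n \to 1-\eps$ (squeezing between $\frac{1-\eps'}{n-1}$ and $\frac{1-\eps''}{n-1}$ for $\eps''<\eps<\eps'$ close to $\eps$, then letting them converge, using monotonicity of $L$ in $x$ from Theorem~\ref{thm: L increasing}).
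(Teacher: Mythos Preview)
Your approach via the Poisson representation \eqref{eqn: Poisson MF} is sound and does yield the limit $L(x_n,K_n)\to\frac{1-\eps}{\eps}$: the key observation that the ratios $\Pr[P=n-1-j]/\Pr[P=n-1]=\prod_{i=0}^{j-1}(1-\eps)(1-\tfrac{i}{n-1})\le(1-\eps)^j$ gives a geometric envelope, so $(n-1-P)\mid\{P\le n-1\}$ converges in distribution to a $\mathrm{Geom}(\eps)$ variable and dominated convergence upgrades this to convergence of means. The paper, however, takes a shorter and rather different route for this lemma: for the upper bound it simply invokes \eqref{eqn: length} (the non-backtracking identity) to get $L(x,K_n)\le\frac{y}{1-y}$ with $y=(n-2)x<1-\eps$, which gives the sharp inequality $L\le\frac{1-\eps}{\eps}$ in one line; for the lower bound it bounds $Z$ above via \eqref{eqn: partition function} and then directly estimates $\sum_k kx^k|\SAW_k(K_n)|=\sum_k k(1-\eps)^k\prod_{j<k}(1-\tfrac{j}{n-1})$ by truncating at $k\le\sqrt{\delta(n-1)}$ --- essentially the same truncation you describe, just phrased in the $|\o|$ variable rather than in the Poisson variable. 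Your route has the conceptual payoff of identifying the limit law of $|\o|$ as geometric; the paper's route is more mechanical but immediately produces the non-asymptotic constants in the lemma.

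One place your argument does not quite reach the lemma as stated: dominated convergence gives $L\to\frac{1-\eps}{\eps}$ but not the exact inequality $L(x,K_n)\le\frac{1-\eps}{\eps}$ for all large $n$. Your termwise domination $\Pr[P=n-1-j]\le\Pr[P=n-1](1-\eps)^j$ combined with the trivial lower bound $\Pr[P\le n-1]\ge\Pr[P=n-1]$ only gives $L\le\sum_j j(1-\eps)^j=\frac{1-\eps}{\eps^2}$, off by a factor of $\eps$. To recover the sharp constant from the Poisson side one needs the stronger stochastic domination $\Pr[n-1-P\ge m\mid P\le n-1]\le(1-\eps)^m$; this does follow from your ratio bound (since it implies $b_{j+m}\le(1-\eps)^m b_j$ termwise, hence $\sum_{j\ge m}b_j\le(1-\eps)^m\sum_{j\ge 0}b_j$), but you do not make that step explicit. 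The paper sidesteps the issue entirely via \eqref{eqn: length}.
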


\begin{proof}
For the upper bound we use \eqref{eqn: length} to deduce that 
$$ L(x,K_n) \leq \frac{x(n-2)}{1 - x(n-2) } \leq \frac{\eps}{1-\eps} . $$

For the lower bound, 
by \eqref{eqn: partition function}, 
\begin{align*}
Z_{K_n}(x) & \leq \frac{n-1}{n-2} \cdot \frac{1}{1- x(n-2) } \leq \frac{n-1}{n-2} \cdot \frac{1}{\eps} .
\end{align*}
Note that $1-\xi \geq e^{-2 \xi}$ for $\xi \leq \tfrac14$, so for any fixed $\delta > 0$,
\begin{align*}
L(x,K_n) & = Z_{K_n}(x)^{-1} \cdot \sum_{k=0}^{n-1} k x^k |\SAW_k(K_n)| 
\geq \eps \cdot \tfrac{n-2}{n-1} \cdot \sum_{k \leq (n-1)/4} k (1-\eps)^k \prod_{j=0}^{k-1} \sr{ 1 - \tfrac{j}{n-1} } \\
& \geq \eps \cdot \tfrac{n-2}{n-1} \cdot \sum_{k \leq (n-1)/4 } k (1-\eps)^k \exp \sr{ - \tfrac{k^2}{n-1} } 
\geq \tfrac{n-2}{n-1} \cdot e^{-\delta} \cdot \sum_{k \leq \sqrt{\delta(n-1)} } k (1-\eps)^k \eps \\
& \geq e^{-\delta} \cdot \tfrac{n-2}{ n-1} \cdot \frac{1-\eps}{\eps} \cdot \sr{ 1 - (1-\eps)^{\sqrt{\delta(n-1)} } } . 
\end{align*}

Taking $n \to \infty$ we have that for all $\delta>0$,
$$ \lim_{n \to \infty} L(\tfrac{1-\eps}{n-1} , K_n) \geq e^{-\delta } \cdot \frac{1-\eps}{\eps}  , $$
so  if $(x_n)_n$ is such that $\lim_{n\to \infty} (n x_n) = 1-\eps$ then
$$ \lim_{n \to \infty} L(x_n,K_n) = \frac{1-\eps}{\eps} . $$
\end{proof}

\subsection{Proofs of theorems for mean field case}

\begin{proof}[Proof of Theorem \ref{thm: mean field}]
Combining of Lemmas 
\ref{lem: super-critical MF}, \ref{lem: critical MF} and \ref{lem: sub-critical MF} we have the asymptotics 
for the expected length $L$.

By Proposition \ref{prop: intersection}, we get that for any $\eps>0$,
$$ Z_{K_n}(\tfrac{1+\eps}{n} ) \geq L(\tfrac{1+\eps}{n}, K_n) +1 \to \infty , $$
by Lemma \ref{lem: super-critical MF},
and
$$ Z_{K_n}(\tfrac{1-\eps}{n}) \leq \tfrac{1}{1- (1-\eps) \tfrac{n-2}{n} } \to \tfrac{1}{\eps} < \infty , $$
by \eqref{eqn: partition function}.

This implies that $(\tfrac1n)_n$ is a critical sequence, and that in the super-critical case $I(x_n,G_n) \to 0$.

Also, in the sub-critical case where $n x_n \to 1-\eps$, we have that 
$$ I(x_n,K_n) = \frac{ L(x_n,K_n)+1 }{ Z_{K_n}(x_n) } \geq  (L(x_n,K_n)+1) \cdot \sr{ 1 - (1-\eps) \tfrac{n-2}{n} }
\to 1 . $$ 
%
%
\end{proof}

We now prove Theorem \ref{thm: gamma MF}, calculating $\gamma$ 
for sequences in the case where $G_n = K_n$, the complete graph on $n$ vertices.

\begin{proof}[Proof of Theorem \ref{thm: gamma MF}]
We start with the critical case $x_n = \tfrac{1}{n-2}$. 
In this case we have by \eqref{eqn: partition function at 1} and \eqref{eqn: length at 1} that 
$$  \tfrac{n-2}{2(n-1)}  \cdot Z_{G_n}(x_n) = \tfrac12 \bE_o[T] \leq \frac{ \bE_o[T^2 + T] }{2 \bE_o[T] } =
L(x_n,G_n) +1 \leq Z_{G_n}(x_n) . $$
Since $\bE_o [T] \to \infty$,
$$  \lim_{n \to \infty} \frac{ \log Z_{G_n}(x_n) }{ \log (L(x_n,G_n)  +1) } = 1 . $$

In the sub-critical case where $(n x_n) \to 1-\eps$, we have by \eqref{eqn: partition function}
$$ Z_{G_n}(x_n) \leq \tfrac{n-1}{(n-2) (1 - (n-2)x_n ) } \to \tfrac{1}{\eps}  , $$
and $L(x_n,G_n) +1 \to \frac1\eps$, so in this case as well,
$$ \lim_{n \to \infty} \frac{ \log Z_{G_n}(x_n) }{ \log (L(x_n,G_n)  +1) } = 1 . $$

Finally, in the super-critical case $(n x_n) \to 1+\eps$, 
set $z_n = \tfrac12 \cdot (\tfrac{1}{n} + x_n)$. 
Then $n z_n \to z : = 1+\tfrac12\eps$,
and we have by Lemma \ref{lem: super-critical MF}
that as $n \to \infty$, 
$$ L(z_n,G_n) \geq (1-o(1) ) \tfrac{\eps}{2+\eps} \cdot n \geq (1-o(1) ) \tfrac{\eps}{2+\eps}  \cdot L(x_n,G_n) . $$
Thus,
$$ \lim_{n \to \infty} \frac{L(z_n,G_n) }{\log ( L(z_n,G_n) +1 ) } = \infty \AND 
\lim_{n \to \infty} \frac{ \log L(x_n,G_n) }{ \log L(z_n,G_n) } = 1  . $$

Consider $f(x) : = \log Z_{G_n}(x)$.  Note that $x f'(x) = L(x,G_n)$. So we have that for $x>z$, there exists $w \in [z,x]$ 
such that  
$$ \log Z_{G_n}(x) - \log Z_{G_n}(z) = f'(w) (x-z) = \tfrac{1}{w} L(w,G_n) (x-z) \geq \tfrac{x-z}{x} \cdot L(z,G_n) . $$
Thus, for $x_n,z_n$ as above,
\begin{align*}
 \lim_{n \to \infty} & \frac{ \log Z_{G_n}(x_n) }{ \log (L(x_n,G_n)+1) } \\
& \geq \lim_{n \to \infty} \frac{ \log Z_{G_n}(z_n) }{ \log (L(z_n,G_n)+1) } 
+ \frac{ (x_n-z_n) \cdot L(z_n,G_n) }{ z_n \cdot \log (L(z_n,G_n)+1 )} \\
& \geq 1 + \tfrac{\eps}{2(1+\eps) } \cdot \lim_{n \to \infty} \frac{ L(z_n,G_n) }{ \log (L(z_n,G_n)+1) } = \infty . 
\end{align*}
\end{proof}

\end{document}